\newtheorem{theorem}{Theorem}[section]
\newtheorem{proposition}[theorem]{Proposition}
\newtheorem{lemma}[theorem]{Lemma}
\newtheorem{definition}[theorem]{Definition}
\newtheorem{assumption}[theorem]{Assumption}
\newcommand\D{\mathcal{D}}
\numberwithin{equation}{section}
\title[PML method for the stochastic acoustic scattering problem]{PML method for the stochastic acoustic scattering problem driven by an additive Gaussian noise}
  \author[H. Guo]{Hongxia Guo}
 \address{School of Mathematical Sciences, and Institute of Mathematics and Interdisciplinary Sciences, Tianjin Normal University, Tianjin, 300387, China}
 \email{hxguo@tjnu.edu.cn}
\author[T. Wang]{Tianjiao Wang}\address{School of Mathematical Sciences, Zhejiang University, Hangzhou 310058, China}}\email{wangtianjiao@zju.edu.cn}
\author[X. Xu]{Xiang Xu}
\address{School of Mathematical Sciences, Zhejiang University, Hangzhou 310058, China}
\email{xxu@zju.edu.cn}
 \author[Y. Zhao]{Yue Zhao}
\address{School of Mathematics and Statistics, and Key Lab NAA-MOE, Central China Normal University,
Wuhan 430079, China}
\email{zhaoy@ccnu.edu.cn}
\subjclass[2010]{35B35, 35R60}
\keywords{stochastic wave equation, Gaussian noise, well-posedness, perfectly matched layer}
\begin{document}

\begin{abstract}

This paper is concerned with the time-domain stochastic acoustic scattering problem driven by a spatially white additive Gaussian noise.
The main contributions of the work are twofold. First, we prove the existence and uniqueness of the pathwise solution to the scattering problem by applying an abstract Laplace transform inversion theorem. The analysis employs the black box scattering theory to investigate the meromorphic continuation of the Helmholtz resolvent defined on rough fields. Second, based on the piecewise constant approximation of the white noise, we construct an approximate wave solution and establish the error estimate. As a consequence,
we develop a PML method and establish the convergence analysis with explicit dependence on the PML layer's thickness and medium properties, as well as the piecewise constant approximation of the white noise. 

\end{abstract}

\maketitle

\section{Introduction}

Stochastic partial differential equations are known to be effective tools in modeling complex physical and engineering phenomena including the wave propagation
\cite{bao, Papan}.
In this paper, we focus on the wave propagation governed by the three-dimensional stochastic acoustic wave equation with sources driven by spatially uncorrelated additive Gaussian noise.  Gaussian noise sources which are spatially uncorrelated have been proposed and investigated in \cite{garnier, zhou} for wave propagations.

Specifically, we consider the stochastic acoustic scattering problem with the Dirichlet boundary condition on the obstacle
\begin{align}\label{eqn}
\begin{cases}
\partial^2_t u(x, t)-\Delta u(x, t) =  f(x, t)\, \dot{W}_x, \quad &(x, t)\in(\mathbb R^3\setminus\overline{O})\times [0, \infty),\\
u = 0,   \quad & (x, t)\in\partial O\times  [0, \infty),\\
u(x, 0) = \partial_tu(x, 0)=0, \quad &x\in\mathbb R^3\setminus \overline{O},
\end{cases}
\end{align}
where $O$ is a bounded domain with smooth boundary $\partial O$ such that $ O \subset \subset B_R:= \{x \in \mathbb{R}^3 : |x| < R\}$ with $R>0$.
Here $\dot{W}_x$ denotes spatial white noise and the function $f \in C^\infty(\mathbb{R}^3\times \mathbb{R}_+)$ characterizes the strength of the random source. 
We assume that $f(x, t)$ is compactly supported in both space and time with ${\rm supp}f\subset\subset (B_R\setminus\overline{O}) \times (0, T)$ where $T > 0$
is a positive constant.

The analysis and computation of the time-domain wave scattering problems have been extensively investigated in deterministic settings. Well-posedness of the acoustic
and electromagnetic scattering problems have been established in \cite{chen, lv, Wei20}. The proofs depend on the abstract inversion theorem of the Laplace transform and the a priori estimate for the Helmholtz and Maxwell equations in the frequency domain. Also, numerical methods have been developed for solving wave scattering problems. 
Initiated by B$\acute{\rm e}$renger \cite{Ber94} for time-domain Maxwell equations, the perfectly matched layers (PML) method has gained widespread popularity due to its near-perfect absorption of outgoing waves \cite{hoop, Joly, Joly12}. Assuming that the wave is outgoing,
 the basic idea of the PML is to surround the physically computational domain by some artificial medium that absorbs the waves effectively.
As another commonly used alternative, we refer to the method of absorbing boundary conditions \cite{Hag, Tsy}.

Compared with deterministic partial differential equations, the difficulty in the study of a SPDE is the lack of regularity of its solution.
For example, the required regularity is not satisfied for the problem \eqref{eqn} to use variational methods as in \cite{chen, lv} to derive the a priori estimate
and establish the analyticity for the solution to the corresponding Helmholtz equation. To fix this issue, we employ the black box scattering theory in \cite{Dyatlov} to investigate
the meromorphic continuation of the resolvent of the Helmholtz equation with respect to complex wave number, which yields the well-posedness of the time-domain scattering problem \eqref{eqn}
via the abstract inversion theorem of the Laplace transform.

After establishing its well-posedness, we propose a PML method for computing the time-domain stochastic scattering problem. The main challenge is also the low regularity
of the solution. To overcome this difficulty, we consider replacing $\dot{W}_x$ with its discretized piecewise constant approximation as in \cite{Zhang, du}.
As a consequence, the solution of the resulting SPDE has the desired regularity for applying the PML method. 
Then we derive an error estimate in $L^2$-norm for the approximation of \eqref{eqn} with discretized white noises.
Based on the error estimate, we establish the convergence analysis
of the PML method with explicit dependence on the PML layer's thickness and medium properties, as well as the piecewise constant approximation of the white noise.

This paper is organized as follows. In Section \ref{wp}, we prove the pathwise existence and uniqueness of the solution for the stochastic scattering problem.
In Section \ref{pml}, we study the approximation of the equation \eqref{eqn} using discretized white noises and establish the error
estimate in  $L^2$-norm. As a consequence, we propose the PML method and establish its convergence analysis.

\section{Well-posedness of the stochastic scattering problem}\label{wp}

In this section, we establish the pathwise well-posedness and the stability of the time-dependent stochastic acoustic scattering problem. 
The analysis is based on the examination of the time-harmonic stochastic Helmholtz equation and the abstract inversion theorem of the Laplace transform. 

For any complex number \( s = s_1 + \mathrm{i}s_2 \in \mathbb{C} \) with \( \Re s > 0 \), the Laplace transform of \( u \) with respect to time $t$ is defined by  
\[
u_L(x, s) := \mathcal{L}(u)(x, s) = \int_0^\infty e^{-st} u(x, t) \, \mathrm{d}t.
\]

Using the Laplace transform in time and the identity  
$\mathcal{L}(\partial_t^2 u)(x, s) = s^2 u_L(x, s) - s u(x, 0) - \partial_t u(x, 0),$
we transform the acoustic wave equation \eqref{eqn} into the inhomogeneous Helmholtz equation  
\begin{align} \label{helm}
\begin{cases}
- \Delta u_L + s^2 u_L = f_L(x, s)\, \dot{W}_x,&\quad x\in\mathbb R^3,\\
u_L=0, &\quad x\in\partial O,
\end{cases}
\end{align}
where \( u_L \) satisfies the  radiation condition  
\[
r \left( \partial_r u_L + s u_L \right) \to 0 \quad \text{as } r \to \infty, \quad r=|x|.
\]
Our strategy to establish the well-posedness of the time-domain stochastic scattering problem \eqref{eqn} is to show the inverse Laplace transform of the solution
$u_L$ is existent. To this end, we employ the following lemma (cf. \cite[Theorem 43.1]{Treves}) concerning the inverse Laplace transform.
\begin{lemma} \label{lem:a}
Let $\omega_L(s)$ denote a holomorphic function in the half complex plane $\Re (s)>\sigma_0$ for some $\sigma_0\in\mathbb R$, valued in the Banach space $\mathcal E$. The following conditions are equivelent:
\begin{itemize}
    \item[(1)] there is a distribution $\omega \in \mathcal{D}_+^{'}(\mathcal E)$ whose Laplace transform is equal to  $\omega_L(s)$, where $\mathcal{D}^{'}_+$ is the space of distributions on the real line which vanish identically in the open negative half-line;
    \item[(2)] there is a $\sigma_1$ with $\sigma_0\leq\sigma_1<\infty$ and an integer $m\geq0$ such that for all complex numbers $s$ with $\textnormal{Re} (s)>\sigma_1$  it holds that $\|\omega_L(s)\|_{\mathcal E}\leq C(1+|s|)^m$.
\end{itemize}
\end{lemma}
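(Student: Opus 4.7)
The plan is to prove the equivalence by establishing each implication separately; this is the vector-valued Paley–Wiener–Schwartz-type theorem for Laplace transforms. Both directions hinge on the structure theorem, which says that any distribution in $\mathcal{D}_+'(\mathcal{E})$ is locally a finite-order derivative of a continuous $\mathcal{E}$-valued function, together with contour-integration arguments in the half-plane $\Re s > \sigma_0$ where holomorphy of $\omega_L$ is available.

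For the implication $(1)\Rightarrow(2)$, I would first pick $\sigma_1 > \sigma_0$ and study the shifted distribution $e^{-\sigma_1 t}\omega$, which belongs to the space of tempered $\mathcal{E}$-valued distributions on $[0,\infty)$. The structure theorem then furnishes a continuous, bounded (after a further exponential shift) $\mathcal{E}$-valued function $F$ supported in $[0,\infty)$ and a nonnegative integer $m$ with $e^{-\sigma_1 t}\omega = \partial_t^m F$. Taking the Laplace transform and integrating by parts $m$ times, with the boundary terms vanishing because $F$ is supported on the half-line, yields $\omega_L(s) = (s-\sigma_1)^m F_L(s-\sigma_1) + (\text{lower-order polynomial terms})$. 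Since $F_L(\cdot)$ is bounded on $\{\Re s > 0\}$, the polynomial growth bound $\|\omega_L(s)\|_{\mathcal{E}} \leq C(1+|s|)^m$ for $\Re s > \sigma_1$ follows.

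For the harder direction $(2)\Rightarrow(1)$, I would absorb the polynomial growth by division: since $\|\omega_L(s)/s^{m+2}\|_{\mathcal{E}} = O(|s|^{-2})$ on each vertical line $\Re s = \sigma > \sigma_1$, the Bochner integral
$$
v(t) := \frac{1}{2\pi\mathrm{i}}\int_{\sigma-\mathrm{i}\infty}^{\sigma+\mathrm{i}\infty} \frac{\omega_L(s)}{s^{m+2}}\, e^{st}\,\mathrm{d}s
$$
converges absolutely and defines a continuous $\mathcal{E}$-valued function of $t$. Cauchy's theorem, together with the holomorphy of $\omega_L$ and the gained decay, shows that $v$ is independent of the choice of $\sigma > \sigma_1$; closing the contour to the right, where $e^{st}$ decays for $t<0$, yields $v(t)=0$ for $t<0$. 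I then set $\omega := \partial_t^{m+2} v$, which manifestly lies in $\mathcal{D}_+'(\mathcal{E})$, and a direct computation using the Laplace transform of a derivative shows $\mathcal{L}(\omega)(s) = \omega_L(s)$ on $\Re s > \sigma_1$, hence on the full domain of holomorphy by analytic continuation.

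The main obstacle will be the vector-valued bookkeeping. In the scalar case these ingredients are standard (this is the content of Treves' argument in \cite{Treves}), but one must justify that the structure theorem for $\mathcal{E}$-valued distributions of finite order, the Bochner integrability of $\omega_L(s)/s^{m+2}$ along vertical lines, and the contour deformations all extend verbatim when $\mathcal{E}$ is a Banach space rather than a scalar field. No new ideas are required once these vector-valued analogs are in place, which is why the lemma is simply quoted from \cite[Theorem 43.1]{Treves} in the paper.
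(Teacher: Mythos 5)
The paper gives no proof of this lemma: it is quoted verbatim from Treves (Theorem 43.1), so there is no in-paper argument to compare against, and your sketch is precisely the standard Paley--Wiener--Schwartz-type argument underlying that citation (structure theorem plus integration by parts for $(1)\Rightarrow(2)$; division to gain decay, Bromwich integral, contour deformation, and differentiation for $(2)\Rightarrow(1)$). The only point to tighten is in $(2)\Rightarrow(1)$: if $\sigma_1<0$ then $s^{m+2}$ vanishes at $s=0$ inside the half-plane, so the claimed $\sigma$-independence of $v$ and the vanishing for $t<0$ by closing the contour to the right would fail as stated; the standard fix is to divide instead by $(s-c)^{m+2}$ with $c<\sigma_1$ (replacing $\partial_t^{m+2}$ by $(\partial_t-c)^{m+2}$), or simply to carry out the whole construction in a smaller half-plane $\Re s>\max(\sigma_1,1)$, which suffices since the Laplace transforms then agree there and hence everywhere by analytic continuation.
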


In light of Lemma \ref{lem:a}, we shall investigate the analyticity of $u_L$ with respect to complex wave number in order to apply the inverse Laplace transform.
To this end, in what follows,
we employ the black box scattering theory (cf. \cite[Section 4]{Dyatlov}) in the context of the obstacle scattering problem in the frequency domain, which will yield the analyticity and estimates of $u_L$ with respect to complex wave number. 

 We introduce the unbounded operator $P=-\Delta: \D(P) \to L^2(\mathbb R^3\backslash \overline{O})$ with domain $\D(P):=\{u \in H^2(\mathbb R^3 \backslash \overline O): u|_{\partial O}=0\}$. It is well known that the spectrum of $P$ is contained in $[0, +\infty)$. Therefore,  the resolvent $R(\lambda):=(P-\lambda^2)^{-1}: L^2(\mathbb R^3\backslash \overline{O}) \to \D(P)$ is well-defined when $\Im \lambda >0.$ However, the resolvent $R(\lambda)$ itself can not be meromorphically extended from $\Im  \lambda>0$ to $\mathbb C$. Fortunately, multiplied by a cut-off function $\chi\in C_0^\infty(\mathbb R^3\setminus\overline{O})$ on both sides, the resolvent
 $\chi R(\lambda) \chi$ can be meromorphically extended from $\Im  \lambda>0$ to $\mathbb C$ as a family of operators: $L^2(\mathbb R^3\backslash \overline{O}) \to\mathcal D(P)$  
 with poles in $\{\lambda:\Im \lambda <0\}$ (cf. \cite[Theorem 4.4]{Dyatlov}). 

 In order to obtain the resolvent estimates and analytic domain, we need the following non-trapping condition imposed on $P$ (cf. \cite[Definition 4.42]{Dyatlov}). Notice that $P$ is self-adjoint so that we can use the spectral theorem to define \[U(t)=\frac{\sin(t\sqrt P)}{\sqrt{P}}.\]
 \begin{assumption}\label{ntrap}
For any $a>R$, $\exists \, T_a>0$ such that for each $\chi\in C_0^\infty
(B_a), \chi|_{B_{R+\tau}}=1$ with $\tau$  being a small constant, it holds that 
\[
\chi U(t)\chi|_{t>T_a} \in C^\infty ((T_a, \infty);\mathcal
L(L^2(\mathbb R^3 \backslash \overline O), \D(P))).
\] 
\end{assumption}

The following proposition \cite[Theorem 4.43]{Dyatlov} concerns the analyticity and estimate of the resolvent $R(\lambda)$ defined on $L^2$ functions with compact supports.
\begin{proposition}\label{nte}
If $P$ is non-trapping under Assumption \ref{ntrap}, then
for any $M>0$, there exists $C_0>0$ such that $R(\lambda):L_{comp}^2(\mathbb R^3 \backslash \overline{O})\to L_{loc}^2(\mathbb R^3\backslash \overline{O})$ is holomorphic
as a family of operators for $\lambda$
in the domain
\[
\Omega_M=\{\lambda\in\mathbb{C}:\,\mathrm{Im}\lambda\geq -M\log|\lambda|,\quad |\lambda|\geq C_0\}.
\]
Assuming $\chi\in C_c^\infty(\mathbb{R}^3)$ such that $\chi=1$ near $B_R$ (the black box), the
following resolvent estimate holds:
\begin{equation}\label{stability_resolvent}
\|\chi R(\lambda)\chi\|_{L^2(\mathbb R^3 \backslash \overline{O})\to H^j(\mathbb R^3\backslash \overline{O})}\leq
C|\lambda|^{j-1}e^{L({\rm Im}\lambda)_-}, \quad j=0, 1, 2,
\end{equation}
where $\lambda\in \Omega_M$,  $C, L$ are positive constants and $({\rm
Im}\lambda)_-:=\max(0,-{\rm Im}\lambda)$. 
\end{proposition}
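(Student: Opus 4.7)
The plan is to invoke the black-box scattering framework of Sj\"ostrand--Zworski as presented in \cite{Dyatlov}. Since $P=-\Delta$ with Dirichlet condition on $\partial O$ is self-adjoint and non-negative, the spectral theorem combined with the elementary identity $\int_0^\infty e^{\mathrm{i}\lambda t}\sin(t\mu)/\mu\,\mathrm{d}t = 1/(\mu^2-\lambda^2)$ for $\mu\geq 0$ and $\Im\lambda>0$ yields the wave-equation representation
\[
R(\lambda)f = \mathrm{i}\int_0^\infty e^{\mathrm{i}\lambda t} U(t)f\,\mathrm{d}t,\qquad f\in L^2_{\rm comp}(\mathbb R^3\setminus\overline O),\ \Im\lambda>0.
\]
Sandwiching with $\chi$ and splitting the $t$-integral at the non-trapping time $T_a$ of Assumption \ref{ntrap}, I decompose
\[
\chi R(\lambda)\chi = \mathrm{i}\int_0^{T_a} e^{\mathrm{i}\lambda t}\chi U(t)\chi\,\mathrm{d}t + \mathrm{i}\int_{T_a}^\infty e^{\mathrm{i}\lambda t}\chi U(t)\chi\,\mathrm{d}t =: I_1(\lambda)+I_2(\lambda).
\]
The finite-time piece $I_1(\lambda)$ is entire in $\lambda$ and satisfies the target bound directly, since on $[0,T_a]$ the map $t\mapsto \chi U(t)\chi$ is continuous into $\mathcal L(L^2,\D(P))$.

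The analytic continuation and sharp estimate rest on $I_2$. Here Assumption \ref{ntrap} provides the crucial gain: $\chi U(t)\chi$ is a $C^\infty$ family of operators into $\D(P)$ for $t>T_a$. To meromorphically continue $I_2$ to $\Im\lambda\leq 0$, I would deform the integration path $(T_a,\infty)$ into the complex upper $t$-plane along a contour $t=T_a+s e^{\mathrm{i}\theta(s)}$ with $\theta$ increasing from $0$; since $|e^{\mathrm{i}\lambda t}|=e^{-s(\Im\lambda\cos\theta+\Re\lambda\sin\theta)}$, rotating $\theta$ makes progressively lower half-planes in $\lambda$ accessible, and the logarithmic region $\Omega_M$ is precisely what is reachable before the operator-norm growth of $\chi U(\cdot)\chi$ along the deformed contour overwhelms the phase decay. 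This Vainberg-style argument is the main obstacle: it requires a semiclassical parametrix for the wave propagator on $\mathbb R^3\setminus\overline O$ that respects both the non-trapping bicharacteristic geometry and the Dirichlet boundary condition, together with propagation-of-singularities estimates that translate non-trapping into polynomial-in-$t$ growth of $\chi U(t)\chi$ in the appropriate operator norm. I would import these ingredients from \cite[Chapter 4]{Dyatlov} rather than reconstruct them; they deliver the bound $\|\chi R(\lambda)\chi\|_{L^2\to L^2}\lesssim |\lambda|^{-1}e^{L(\Im\lambda)_-}$ on $\Omega_M$, in which the factor $|\lambda|^{-1}$ is inherent to the representation formula and $e^{L(\Im\lambda)_-}$ records the worst phase behavior along the contour.

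To upgrade the $L^2\to L^2$ estimate to $L^2\to H^j$ with the advertised extra factor $|\lambda|^j$, I would argue by elliptic regularity. Setting $u=R(\lambda)\chi f$, one has $u|_{\partial O}=0$ and $-\Delta u = \lambda^2 u + \chi f$ in $L^2(\mathbb R^3\setminus\overline O)$. Choose an auxiliary cut-off $\chi_1\in C_c^\infty(\mathbb R^3)$ with $\chi_1\equiv 1$ on $\mathrm{supp}\,\chi$; the standard Dirichlet $H^2$ regularity estimate on a bounded domain $B_{R'}\setminus\overline O$ containing $\mathrm{supp}\,\chi_1$ yields
\[
\|\chi u\|_{H^2}\lesssim \|\Delta(\chi u)\|_{L^2}+\|\chi u\|_{L^2}\lesssim (|\lambda|^2+1)\|\chi_1 u\|_{L^2}+\|f\|_{L^2}.
\]
Combining this with the $j=0$ bound applied with $\chi_1$ in place of $\chi$ gives the $j=2$ case of \eqref{stability_resolvent}, and the $j=1$ case then follows by interpolation (or by the same elliptic argument in $H^1$).
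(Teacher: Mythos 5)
Your proposal is correct and in substance coincides with the paper's treatment: the paper states this proposition as a direct quotation of \cite[Theorem 4.43]{Dyatlov} with no proof of its own, and your sketch (wave-propagator representation, splitting at the non-trapping time $T_a$, Vainberg contour deformation into $\Omega_M$, then elliptic regularity to pass from $j=0$ to $j=1,2$) is precisely the standard argument behind that cited theorem, with the hard parametrix/propagation-of-singularities input deferred to the same reference. The only blemish is the spurious factor of $\mathrm{i}$ in your representation formula, which is inconsistent with the identity $\int_0^\infty e^{\mathrm{i}\lambda t}\sin(t\mu)/\mu\,\mathrm{d}t=1/(\mu^2-\lambda^2)$ that you yourself state; this is immaterial to the estimates.
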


Denote
the free resolvent 
\[
{R}_0(\lambda):=(-\Delta-\lambda^2)^{-1}
\] which has the explicit kernel
\[
G_0(x, y; \lambda)=\frac{e^{{\rm i}\lambda|x-y|}}{4\pi |x-y| }.
\]
Notice that the above proposition can not be applied directly in the following analysis since the white noise is too rough to be in $L^2$. To resolve this issue, we begin by establishing the analyticity and estimates of the free resolvent $\chi R_0(\lambda)\chi$  as a family of operators from $H^\gamma(\mathbb R^3 \backslash \overline O)$ to $H^\gamma(\mathbb R^3 \backslash \overline O)$ for any real number $\gamma\in\mathbb R$.
Before that, we recall the following definition of outgoing functions (cf. \cite[Definition 1.6]{notes}). \begin{definition}
    Given $\lambda \in \mathbb C$, we say that the function $u$ is outgoing, if there exists $A>0$ and $f \in \mathcal E'$ such that $u|_{|x|>A}=
    (R_0(\lambda)f)|_{|x|>A}.$
\end{definition}

The following lemma  (cf. \cite[Lemma 3.3]{WXZ}) is useful in the subsequent analysis, which concerns the analyticity and estimates of the free resolvent. 

	 \begin{lemma}\label{Lem4.1}
	 Let $\chi \in C_0^\infty(\mathbb{R}^3)$ be a cut-off function.
	The free resolvent $R_0(\lambda)$ is analytic for $\lambda \in \mathbb{C}$ with $\Im \lambda>0$ as a family of operators \[
 R_0(\lambda): H^\gamma(\mathbb{R}^3) \to H^{\gamma_1}(\mathbb{R}^3)
 \] for all $\gamma \in \mathbb{R}$ and $\gamma_1\in [\gamma, \gamma+2]$. Furthermore, ${R}_0(\lambda)$ can be extended to a family of analytic operators for $\lambda \in \mathbb{C}$ as follows \[
 \chi R_0(\lambda) \chi : H^\gamma(\mathbb{R}^3) \to H^{\gamma_1}(\mathbb{R}^3)
 \] with the resolvent estimates
 \begin{equation*}
     \|\chi R_0(\lambda) \chi \|_{\mathcal{L}(H^\gamma,H^{\gamma_1})} \leq C(1+|\lambda|)^{\gamma_1-\gamma-1}e^{L (\Im \lambda)_-},
 \end{equation*}
 where $C$ is a positive constant, the constant $L$ is only dependent on $supp \, \chi$, and $\|\cdot\|_{\mathcal{L}(H^\gamma,H^{\gamma_1})}$ denotes the operator norm from $H^\gamma$ 
 to $H^{\gamma_1}$.
  \end{lemma}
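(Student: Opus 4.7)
The plan is to treat the two assertions separately: first the pure boundedness and analyticity of $R_0(\lambda)$ on $\mathbb{R}^3$ in the open upper half-plane, and then the entire extension with the quantitative estimate once the cutoffs $\chi$ are inserted.

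For the upper half-plane assertion I would work in Fourier variables. For $\Im\lambda>0$ the symbol $(|\xi|^2-\lambda^2)^{-1}$ is holomorphic in $\lambda$ and, on any compact subset of $\{\Im\lambda>0\}$, satisfies a uniform lower bound $||\xi|^2-\lambda^2|\geq c(1+|\xi|^2)$. Hence whenever $\gamma_1\leq\gamma+2$ the composed multiplier $(1+|\xi|^2)^{\gamma_1/2}(|\xi|^2-\lambda^2)^{-1}(1+|\xi|^2)^{-\gamma/2}$ is bounded on $L^2(\mathbb{R}^3)$, which gives $R_0(\lambda):H^\gamma\to H^{\gamma_1}$; analyticity in $\lambda$ then follows by differentiating the symbol in $\lambda$ and applying dominated convergence to the Plancherel integral.

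For the entire extension with cutoffs, I would use the explicit kernel $G_0(x,y;\lambda)=e^{i\lambda|x-y|}/(4\pi|x-y|)$. Away from the diagonal this is entire in $\lambda$, and the singularity at $x=y$ is of the $\lambda$-independent Newton-potential type $1/(4\pi|x-y|)$; multiplication by the compactly supported $\chi(x)\chi(y)$ turns the Schwartz kernel into a compactly supported locally $L^2$ kernel depending holomorphically on $\lambda\in\mathbb{C}$, so $\chi R_0(\lambda)\chi$ is an entire operator-valued family. The exponential factor $e^{L(\Im\lambda)_-}$ in the estimate is immediate from $|e^{i\lambda|x-y|}|=e^{-\Im\lambda|x-y|}\leq e^{L(\Im\lambda)_-}$ once $|x-y|$ is bounded by $L:=\operatorname{diam}(\operatorname{supp}\chi)$. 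The polynomial growth $(1+|\lambda|)^{\gamma_1-\gamma-1}$ I would build up from two endpoint estimates at $\gamma=0$: a base bound $\|\chi R_0(\lambda)\chi\|_{L^2\to L^2}\leq C(1+|\lambda|)^{-1}e^{L(\Im\lambda)_-}$, which is a standard consequence of Agmon-type limiting-absorption analysis of the oscillatory kernel; and an $L^2\to H^2$ bound, obtained from $(-\Delta-\lambda^2)R_0(\lambda)=I$ by multiplying on both sides by $\chi$, which expresses $-\Delta(\chi R_0(\lambda)\chi g)$ in terms of $\chi g$, the commutator $[\Delta,\chi]R_0(\lambda)\chi g$, and $\lambda^2\chi R_0(\lambda)\chi g$, yielding the factor $(1+|\lambda|)$. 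Complex interpolation between these endpoints gives $\gamma=0$ and $\gamma_1\in[0,2]$; duality (using $R_0(\lambda)^*=R_0(\bar\lambda)$) then extends to $\gamma\in[-2,0]$; finally, a commutator argument passing factors of $(1-\Delta)^{s/2}$ through $\chi R_0(\lambda)\chi$ handles general real $\gamma$.

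The main obstacle is keeping the $|\lambda|$-accounting sharp throughout. The cutoff $\chi$ does not commute with fractional powers $(1-\Delta)^{s/2}$, so the commutator terms that appear must be controlled by lower-order operators carrying the same exponential factor $e^{L(\Im\lambda)_-}$ and no extra power of $|\lambda|$; this is where Calder\'on--Vaillancourt-type pseudodifferential estimates enter. The limiting-absorption base estimate for $\chi R_0(\lambda)\chi$ on $L^2$, which is really the only step that uses the non-trivial oscillation of the kernel, is the deepest ingredient and the source of the $|\lambda|^{-1}$ decay; everything else is a matter of bookkeeping relative to this input.
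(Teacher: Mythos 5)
The paper does not actually prove this lemma: it is imported verbatim as \cite[Lemma 3.3]{WXZ}, so there is no in-paper argument to compare against. Your outline follows the route one would expect that reference to take (Fourier multiplier bound in the upper half-plane; explicit kernel $e^{\mathrm{i}\lambda|x-y|}/(4\pi|x-y|)$ for the entire continuation; the standard $L^2_{\rm comp}\to L^2_{\rm loc}$ bound $\|\chi R_0(\lambda)\chi\|_{L^2\to L^2}\lesssim (1+|\lambda|)^{-1}e^{L(\Im\lambda)_-}$ as the base case; the equation $(-\Delta-\lambda^2)R_0(\lambda)=I$ to climb to $H^2$; then interpolation and duality), and the first half of it is sound.

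Two points need repair. First, the adjoint identity is $R_0(\lambda)^*=R_0(-\bar\lambda)$, not $R_0(\bar\lambda)$: the adjoint kernel is $\overline{G_0(y,x;\lambda)}=e^{-\mathrm{i}\bar\lambda|x-y|}/(4\pi|x-y|)=G_0(x,y;-\bar\lambda)$. This matters quantitatively, since $(\Im\bar\lambda)_-=(\Im\lambda)_+$, so your version would put the exponential weight in the wrong half-plane; with $-\bar\lambda$ one has $(\Im(-\bar\lambda))_-=(\Im\lambda)_-$ and the duality step closes correctly. Second, and more substantially, the passage to arbitrary real $\gamma$ is where the actual content of the lemma lies, and "a commutator argument passing $(1-\Delta)^{s/2}$ through $\chi R_0(\lambda)\chi$" is not yet an argument: for $\Im\lambda\le 0$ the operator $R_0(\lambda)$ exists only after the cutoffs are inserted, so $(1-\Delta)^{s/2}$ cannot be commuted through the middle factor, and the conjugated operator $(1-\Delta)^{s/2}\chi(1-\Delta)^{-s/2}$ is no longer compactly supported, so the base $L^2$ estimate does not apply to it directly. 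One must either insert auxiliary cutoffs and show the resulting commutator remainders are smoothing with the same $e^{L(\Im\lambda)_-}$ weight and no extra power of $|\lambda|$, or argue directly on the kernel; as written, the uniform-in-$\lambda$ control of these corrections is asserted rather than established. Filling in that step (or simply citing \cite{WXZ} as the paper does) is what is needed to make the proof complete.
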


The following proposition concerns the analyticity and estimates of the resolvent $R(\lambda)$, which is crucial in applying the abstract inverse Laplace transform.
It is also of independent interests in scattering theory.
\begin{proposition}\label{ntes}
Assume $\chi_1\in C_c^\infty(\mathbb{R}^3)$ and $\chi_2 \in C_c^\infty(\mathbb{R}^3\backslash \overline{O})$ such that $\chi_1=1$ near ${\rm supp}\,\chi_2$. 
For any $M>0$, there exists $C_0>0$ such that $\chi_1 R(\lambda) \chi_2: H^{\gamma}(\mathbb R^3 \backslash \overline O) \to H^{\gamma+j}(\mathbb R^3 \backslash \overline O), j = 0, 1, 2,$ is holomorphic
in the domain
\[
\Omega_M=\{\lambda\in\mathbb{C}:\,\mathrm{Im}\lambda\geq -M\log|\lambda|,\quad |\lambda|\geq C_0\}.
\]
Moreover, the
following resolvent estimates hold:
\begin{equation*}
\|\chi_1 R(\lambda) \chi_2  \|_{H^{\gamma}(\mathbb R^3 \backslash \overline O) \to H^{\gamma+j}(\mathbb R^3 \backslash \overline O)} \le C e^{L(\Im \lambda)_-}|\lambda|^{|\gamma|+1+ j}, \quad j=0, 1,2.
\end{equation*}
for {$\gamma \in \mathbb 
R$} and $\lambda\in \Omega_M$, where $C, L$ are constants and $({\rm
Im}\lambda)_-:=\max\{0,-{\rm Im}\lambda\}$.
\end{proposition}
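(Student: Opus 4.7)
My plan is to reduce the stated Sobolev estimates to the $L^2$-based bound of Proposition \ref{nte} by (i) an elliptic bootstrap in positive $\gamma$ and (ii) a duality argument for negative $\gamma$. Holomorphy in $\lambda \in \Omega_M$ between each Sobolev pair is inherited from Proposition \ref{nte}, since $\chi_1 R(\lambda)\chi_2$ is the same abstract operator in every such pair, and holomorphy on a dense subset (e.g.\ $C_c^\infty$) combined with uniform boundedness gives holomorphy on $H^\gamma$.

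For the base case $\gamma = 0$, pick $\chi \in C_c^\infty(\mathbb{R}^3)$ equal to $1$ on a neighborhood of $B_R \cup \mathrm{supp}\,\chi_1 \cup \mathrm{supp}\,\chi_2$. Then $\chi_1 R(\lambda)\chi_2 = \chi_1 (\chi R(\lambda)\chi)\chi_2$, and Proposition \ref{nte} yields $\|\chi_1 R(\lambda)\chi_2\|_{L^2 \to H^j} \leq C|\lambda|^{j-1} e^{L(\mathrm{Im}\lambda)_-} \leq C|\lambda|^{1+j} e^{L(\mathrm{Im}\lambda)_-}$ on $\Omega_M$ (using $|\lambda| \geq C_0$). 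For positive integer $\gamma$, we bootstrap from the equation $-\Delta u = \lambda^2 u + \chi_2 f$ satisfied by $u = R(\lambda)(\chi_2 f)$ with $u|_{\partial O} = 0$: introduce a chain $\chi_1 \prec \chi_1^{(1)} \prec \cdots \prec \chi_1^{(N)}$, each equal to $1$ near $\mathrm{supp}\,\chi_2$, and apply elliptic regularity (up to the Dirichlet boundary where a cutoff meets $\partial O$) to $\chi_1^{(i)} u$. The commutator $[-\Delta, \chi_1^{(i)}] u$ is first order in $u$ and supported in $\mathrm{supp}\,\nabla\chi_1^{(i)} \subset \{\chi_1^{(i+1)} = 1\}$, so it is absorbed by the next cutoff. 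Each iteration gains two derivatives at the cost of a $|\lambda|^2$ factor; $\lceil\gamma/2\rceil$ iterations reduce to the $\gamma = 0$ case and produce $|\lambda|^{\gamma+1+j}$ for positive integer $\gamma$, with non-integer $\gamma > 0$ following by complex interpolation between consecutive integers.

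For $\gamma < 0$, use the adjoint identity $R(\lambda)^* = R(-\overline\lambda)$, valid for $\mathrm{Im}\lambda > 0$ by self-adjointness of the Dirichlet realization $P = -\Delta$ on $L^2(\mathbb{R}^3 \setminus \overline O)$, and extended to all of $\Omega_M$ by uniqueness of meromorphic continuation; note $\Omega_M$ is invariant under $\lambda \mapsto -\overline\lambda$ since $|{-\overline\lambda}| = |\lambda|$ and $\mathrm{Im}(-\overline\lambda) = \mathrm{Im}\lambda$. Identifying $(H^\gamma)^* = H^{-\gamma}$ by the $L^2$-pairing, the adjoint of $\chi_1 R(\lambda)\chi_2 : H^\gamma \to H^{\gamma+j}$ is $\chi_2 R(-\overline\lambda)\chi_1 : H^{-\gamma-j} \to H^{-\gamma}$. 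For $\gamma \leq -j$ the preceding positive-$\gamma$ estimate applies to $-\gamma-j \geq 0$ and gives the bound $\leq C|\lambda|^{|\gamma|+1+j} e^{L(\mathrm{Im}\lambda)_-}$; the intermediate range $-j < \gamma < 0$ is filled in by complex interpolation between $\gamma = 0$ and $\gamma = -j$. The main technical obstacle is the bookkeeping in the positive-integer bootstrap: the cutoff chain must be set up so that the iteration terminates with the expected $|\lambda|$-power, and boundary elliptic regularity with the zero Dirichlet condition is needed where the cutoffs reach $\partial O$. A subsidiary point is checking the adjoint identity throughout $\Omega_M$, which uses analytic continuation rather than self-adjointness directly.
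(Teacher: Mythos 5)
Your proposal is correct in outline and proves the right statement, but it takes a genuinely different route from the paper in the positive-regularity step. The paper works with the Schwartz kernel: it constructs the Dirichlet Green function and decomposes it as $G_D = G_0 + P$, handles the singular free part $\chi_1 R_0(\lambda)\chi_2$ on every $H^\gamma$, $\gamma\in\mathbb R$, at once via Lemma \ref{Lem4.1}, and then estimates the smooth correction $P(\cdot,y;\lambda)$ in $H^n$ (uniformly in $y$ over the source support) by combining Proposition \ref{nte} with interior elliptic regularity and induction on $n$. You instead run an elliptic bootstrap on the solution itself with a nested chain of cutoffs and commutators; this avoids building the Green function and, if the bookkeeping is done carefully, actually yields slightly sharper powers of $|\lambda|$ than the stated $|\lambda|^{\gamma+1+j}$, but it puts all the burden on the commutator estimates and on boundary elliptic regularity with the zero Dirichlet condition where the cutoffs meet $\partial O$ — exactly the part you flag but do not carry out. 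For $\gamma<0$ the two arguments are essentially equivalent: the paper uses the symmetry $G_D(x,y;\lambda)=G_D(y,x;\lambda)$ (a transpose at the same $\lambda$), while you use the Hilbert-space adjoint $R(\lambda)^*=R(-\overline\lambda)$ continued analytically into $\Omega_M$; since $|-\overline\lambda|=|\lambda|$ and $(\Im(-\overline\lambda))_-=(\Im\lambda)_-$ the resulting bounds coincide. One point you should make explicit (the paper does, if only with an ``easy to verify'' remark): after dualizing, the roles of $\chi_1$ and $\chi_2$ are interchanged, so the positive-$\gamma$ estimate must also be established when the source-side cutoff is the one that may intersect $\overline O$; your bootstrap does survive this because the Dirichlet boundary regularity applies to $H^{-\gamma-j}$ data up to the smooth boundary, but as written the swap is silently assumed. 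With that caveat addressed and the cutoff-chain induction written out, your argument is a valid alternative proof.
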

\begin{proof}
  We begin by investigating the Schwarz kernel of the resolvent $R(\lambda)$. Suppose that $G_D(x,y;\lambda)$ is the Dirichlet Green function of the obstacle scattering problem. In other words, for any fixed $y \in \mathbb R^3 \backslash \overline O$, $G_D(\cdot,y;\lambda)$ is the unique solution to the Dirichlet problem \begin{align}\label{gf1}
        \begin{cases}
            (\Delta+\lambda^2)G_D(\cdot, y; \lambda)=-\delta_y, \\
            G_D(\cdot,y;\lambda)=0\quad \text{\rm on}\quad \partial O, \\
            G_D(\cdot,y;\lambda) \,\textrm{is outgoing}.
        \end{cases}
    \end{align}
   We decompose the Dirichlet Green function into the sum of the fundamental solution and a non-singular part as $G_D(x,y;\lambda)=G_0(x,y;\lambda)+P(x,y;\lambda)$. Then \eqref{gf1} is equivalent to the following problem \begin{align}\label{gf2}
        \begin{cases}
            (\Delta+\lambda^2)P(\cdot,y; \lambda)=0\quad \text{\rm in}\quad  \mathbb R^3 \backslash \overline O, \\
            P(\cdot,y;\lambda)=-G_0(\cdot,y;\lambda)\quad \text{\rm on}\quad \partial O,\\
           P(\cdot,y;\lambda) \,\textrm{is outgoing}.
        \end{cases}
    \end{align} Extend the smooth function $G_0(\cdot, y; \lambda)|_{\partial O}$ to $\widetilde G_0(\cdot, y; \lambda) \in C_c^\infty(\mathbb R^3\backslash O)$ and let $\widetilde P=P+\widetilde G_0$. As a result, \eqref{gf2} is equivalent to \begin{align}\label{gf3}
        \begin{cases}
            (\Delta+\lambda^2)\widetilde P(\cdot, y; \lambda)=(\Delta+\lambda^2)\widetilde G_0(\cdot, y; \lambda) \quad \text{\rm in}\quad  \mathbb R^3 \backslash \overline O, \\
           \widetilde P(\cdot,y;\lambda)=0\quad \text{\rm on}\quad \partial O, \\
            \widetilde P(\cdot,y;\lambda) \,\textrm{is outgoing}.
        \end{cases}
    \end{align}
  By \cite[Proposition 5.2]{notes}, the scattering problem \eqref{gf3} admits a unique solution for $\lambda \in \Omega_M$, and thus there also exists a unique solution to \eqref{gf1}. Hence, the Green function $G_D$ is well-defined. Furthermore, it can be verified for any $g \in L^2_{comp}(\mathbb R^3\backslash \overline O)$, the function $\int_{\mathbb R^3\backslash \overline{O}} G_D(x,y;\lambda) g(y)\,\mathrm{d}y$ satisfies \begin{align}\label{exp}
        \begin{cases}
            (\Delta+\lambda^2)u=-g, \\
           u=0\quad \text{\rm on}\quad \partial O, \\
           u \,\,\textrm{is outgoing}.
        \end{cases}
    \end{align}
    On the other hand, applying \cite[Proposition 5.2]{notes} again,  the problem \eqref{exp} admits a unique solution $R(\lambda)g\in H_{loc}^2(\mathbb R^3 \backslash \overline O)$ for $\lambda \in \Omega_M.$ 
    As a consequence, we arrive at \[R(\lambda)g=\int_{\mathbb R^3\backslash \overline{O}} G_D(x,y;\lambda) g(y)\,\mathrm{d}y=\int_{\mathbb R^3\backslash \overline{O}}(G_0(x,y;\lambda)+P(x,y;\lambda))g(y)\,\mathrm{d}y.\]
    
    We consider the resolvent estimates.
    The case $\gamma = 0$ is a direct consequence of Proposition \ref{nte}. 
    Now we derive the estimate for $\gamma=1.$ Suppose that $q \in H^1(\mathbb R^3\backslash \overline{O})$. Then 
    \[\chi_1 R(\lambda) (\chi_2 q)=\chi_1R_0(\lambda)(\chi_2 q)+\int_{\mathbb R^3\backslash \overline{O}}\chi_1(\cdot)P(\cdot,y;\lambda)\chi_2(y)q(y)\,\mathrm{d}y.\]
    The estimate for the first part can be directly obtained from Lemma \ref{Lem4.1}. Consequently, it remains to estimate the second part. Taking derivative gives \begin{align*}
    &\partial^\alpha\int_{\mathbb R^3\backslash \overline{O}}\chi_1(\cdot)P(\cdot,y;\lambda)\chi_2(y)q(y)\,\mathrm{d}y \\ & \quad=\int_{\mathbb R^3\backslash \overline{O}}\partial^\alpha\chi_1(\cdot)P(\cdot,y;\lambda)\chi_2(y)q(y)\,\mathrm{d}y+\int_{\mathbb R^3\backslash \overline{O}}\chi_1(\cdot)\partial^\alpha P(\cdot,y;\lambda)\chi_2(y)q(y)\,\mathrm{d}y,
    \end{align*} where $\alpha\in (\mathbb N^*)^3$ with $|\alpha|=1$. Here $\mathbb N^*$ denotes the set of positive integers.
    By Minkovski inequality and Cauchy-Schwarz inequality, there holds the estimate \begin{align*}
   & \left\|\int_{\mathbb R^3\backslash \overline{O}}\chi_1(\cdot) P(\cdot,y;\lambda)\chi_2(y)q(y)\,\mathrm{d}y\right\|_{H^1(\mathbb R^3\backslash \overline{O})} \\ 
   &= \left\|\int_{\mathbb R^3\backslash \overline{O}}\chi_1(\cdot)P(\cdot,y;\lambda)\chi_2(y)q(y)\,\mathrm{d}y\right\|_{L^2(\mathbb R^3\backslash \overline{O})} \\ 
   &\quad+\sum_{j=1}^3\left\|\int_{\mathbb R^3\backslash \overline{O}}\partial_j\chi_1(\cdot)P(\cdot,y;\lambda)\chi_2(y)q(y)\,\mathrm{d}y\right\|_{L^2(\mathbb R^3\backslash \overline{O})} \\ &\quad+\sum_{j=1}^3\left\|\int_{\mathbb R^3\backslash \overline{O}}\chi_1(\cdot)\partial_{j} P(\cdot,y;\lambda)\chi_2(y)q(y)\,\mathrm{d}y\right\|_{L^2(\mathbb R^3\backslash \overline{O})} \\ 
   &\le C\|\chi_1(x)\chi_2(y)P(x,y;\lambda)\|_{L^2(\mathbb R^3\backslash \overline{O} \times \mathbb R^3\backslash \overline{O})}\|q\|_{L^2(B_{R'}\backslash \overline{O})} \\ &\quad+C\sum_{j=1}^3\|\partial_{x_j}\chi_1(x)\chi_2(y)P(x,y;\lambda)\|_{L^2(\mathbb R^3\backslash \overline{O} \times \mathbb R^3\backslash \overline{O})}\|q\|_{L^2(B_{R'}\backslash \overline{O})}\\ &\quad+C\sum_{j=1}^3\|\chi_1(x)\chi_2(y)\partial_{x_j}P(x,y;\lambda)\|_{L^2(\mathbb R^3\backslash \overline{O} \times \mathbb R^3\backslash \overline{O})}\|q\|_{L^2(B_{R'}\backslash \overline{O})} \\ 
   &\le C\left(\|P(x,y;\lambda)\|_{L^2(B_{R'}\backslash \overline{O} \times F)}+\sum_{j=1}^3\|\partial_{x_j}P(x,y;\lambda)\|_{L^2(B_{R'}\backslash \overline{O} \times F)}\right)\|q\|_{L^2(B_{R'}\backslash \overline{O})},
    \end{align*} 
    where $R'$ is a constant such that ${\rm supp }\, \chi_2 \subset {\rm supp }\, \chi_1 \subset B_{R'}$ and $F$ is a domain such that $ {\rm supp }\, \chi_2  \subset F \subset\subset  B_{R'}\backslash \overline O$. We need to estimate $\|\partial_x^\alpha P(x,y;\lambda)\|_{L^2(B_{R'}\backslash \overline{O} \times F)}$ for $|\alpha| \le 1.$ Recalling \eqref{gf3}, by applying Proposition \ref{nte} we obtain  \begin{align*}
    \|\widetilde P(\cdot,y;\lambda)\|_{H^1(B_{R'}\backslash \overline{O})} &\le C e^{L(\Im \lambda)_-}\|(\Delta+\lambda^2)\widetilde G_0(\cdot ,y;\lambda)\|_{L^2(\mathbb R^3\backslash \overline{O})} \\ &\le Ce^{L(\Im \lambda)_-}\|\widetilde G_0(\cdot ,y;\lambda)\|_{H^2(\mathbb R^3\backslash \overline{O})}+C |\lambda|^2e^{L(\Im \lambda)_-}\|\widetilde G_0(\cdot ,y;\lambda)\|_{L^2(\mathbb R^3\backslash \overline{O})}. 
    \end{align*} Applying the inverse trace theorem, we can choose the extension $\widetilde G_0(\cdot,y;\lambda)$ of $G_0(\cdot,y;\lambda)|_{\partial O}$ such that \[
    \|\widetilde G_0(\cdot ,y;\lambda)\|_{H^2(\mathbb R^3\backslash \overline{O})} \le C\| G_0(\cdot ,y;\lambda)\|_{H^{3/2}(\partial{O})},\, \|\widetilde G_0(\cdot ,y;\lambda)\|_{H^{1/2}(\mathbb R^3\backslash  \overline{O})} \le C\| G_0(\cdot ,y;\lambda)\|_{L^2(\partial{O})}.
    \] Hence, there holds 
    \begin{align*}
   \|P(\cdot,y;\lambda)\|_{H^1(B_{R'}\backslash \overline{O})} &\le \|\widetilde P(\cdot,y;\lambda)\|_{H^1(B_{R'}\backslash \overline{O})}+\|\widetilde G_0(\cdot,y;\lambda)\|_{H^1(B_{R'}\backslash \overline{O})} \\ &\le C  e^{L(\Im \lambda)_-}\| G_0(\cdot ,y;\lambda)\|_{H^{3/2}(\partial{O})} +C|\lambda|^2e^{L(\Im \lambda)_-}\| G_0(\cdot ,y;\lambda)\|_{L^{2}(\partial{O})},
    \end{align*} which gives \begin{align*}
    \int_{D}\|P(\cdot,y;\lambda)\|^2_{H^1(B_{R'}\backslash \overline{O})}\,\mathrm{d}y &\le C^2 e^{2L(\Im \lambda)_-}\int_{D}\|\widetilde G_0(\cdot ,y;\lambda)\|^2_{H^{3/2}(\partial{O})}\,\mathrm{d}y \\ &\quad+C^2|\lambda|^4e^{2L(\Im \lambda)_-}\int_D\|\widetilde G_0(\cdot ,y;\lambda)\|^2_{L^2(\partial{O})}\,\mathrm{d}y\\ & \le C^2 |\lambda|^{4}e^{2L(\Im \lambda)_-}.
    \end{align*}
    In summary, we have obtained the estimate \[
    \left\|\int_{\mathbb R^3\backslash \overline{O}}\chi_1(\cdot) P(\cdot,y;\lambda)\chi_2(y)q(y)\,\mathrm{d}y\right\|_{H^1(\mathbb R^3\backslash \overline{O})} \le Ce^{L(\Im \lambda)_-}|\lambda|^{2}\|q\|_{L^2(\mathbb R^3\backslash \overline{O})}.
    \] Consequently, we get \[
    \|\chi_1 R(\lambda)\chi_2\|_{H^1(\mathbb R^3\backslash \overline{O}) \to H^1(\mathbb R^3\backslash \overline{O})} \le Ce^{L(\Im \lambda)_-}|\lambda|^{2}.
    \]
  Furthermore, from standard interior elliptic regularity theorem \cite[(7.13)]{Stein}, we obtain 
  \begin{align}\label{ert}
  &\|\widetilde P(\cdot,y;\lambda)\|_{H^{n+2}(B_{R_1}\backslash \overline O)} \notag\\
  &\le C\Big(\|(\Delta+\lambda^2)\widetilde G_0(\cdot ,y;\lambda)\|_{H^n(\mathbb R^3\backslash \overline{O})}+|\lambda|^2\|\widetilde P(\cdot,y;\lambda)\|_{H^{n}(B_{R_2}\backslash \overline O)}\Big)
  \end{align}
  for all $n \in \mathbb N$ with positive constants $R_1, R_2$ such that $R_2>R_1$ and $ O \subset \subset B_{R_1}$. Then by induction, it is easy to derive that \[\|\widetilde P(\cdot,y;\lambda)\|_{H^\gamma(B_{R'}\backslash \overline O)} \le C|\lambda|^{1+\gamma}e^{L(\Im \lambda)_-}\] for any $\gamma \in \mathbb N^*,$ which implies \[
  \left\|\int_{\mathbb R^3\backslash \overline{O}}\chi_1(\cdot) P(\cdot,y;\lambda)\chi_2(y)q(y)\,\mathrm{d}y\right\|_{H^\gamma(\mathbb R^3\backslash \overline{O})} \le Ce^{L(\Im \lambda)_-}|\lambda|^{\gamma+1}\|q\|_{L^2(\mathbb R^3\backslash \overline{O})}.
  \] Therefore, we obtain resolvent estimates
   \begin{align}\label{s>01}
      \|\chi_1 R(\lambda)\chi_2\|_{H^\gamma(\mathbb R^3\backslash \overline{O}) \to H^\gamma(\mathbb R^3\backslash \overline{O})} \le Ce^{L(\Im \lambda)_-}|\lambda|^{\gamma+1} 
  \end{align} for any $\gamma \in \mathbb N^*$. Moreover,  from \eqref{ert} we have the resolvent estimates
  \begin{align}\label{s>0}
      \|\chi_1 R(\lambda)\chi_2\|_{H^\gamma(\mathbb R^3\backslash \overline{O}) \to H^{\gamma+j}(\mathbb R^3\backslash \overline{O})} \le Ce^{L(\Im \lambda)_-}|\lambda|^{\gamma+1+j}, \quad j=1, 2.
  \end{align}
  By using interpolation, the inequalities \eqref{s>01} and \eqref{s>0} hold for any $\gamma>0$, which give the resolvent estimates for $\gamma>0$.
    
 Estimates for $\gamma<0$ can be derived from the symmetry of the Green function. In fact, supposing $q \in H^\gamma(\mathbb R^3 \backslash \overline{O})$ and $\varphi \in C_c^\infty(\mathbb R^3\backslash \overline{O})$, direct calculation yields \begin{align*}
      \left\langle\chi_1 R(\lambda) \chi_2  q,  \varphi\right\rangle &=\left\langle \chi_2   q, \left\langle G_D(x,y;\lambda), \varphi \chi_1 \right\rangle_x \right\rangle_y \\ &=\left\langle  \chi_2   q , \left\langle G_D(y,x;\lambda),\varphi \chi_1  \right\rangle_x \right\rangle_y
		\\&=\left\langle   q,\chi_2 R(\lambda)\chi_1 \varphi \right\rangle.
  \end{align*} 
  It is easy to verify that the inequality \eqref{s>0} still holds if we interchange $\chi_1$ and $\chi_2$ so that \begin{align*}
      |\left\langle\chi_1 R(\lambda) \chi_2  q,  \varphi\right\rangle| &=|\left\langle   q,\chi_2 R(\lambda)\chi_1 \varphi \right\rangle| \le \|q\|_{H^\gamma}\|\chi_2 R(\lambda)\chi_1 \varphi\|_{H^{-\gamma}} \\ &\le C e^{L(\Im \lambda)_-}|\lambda|^{-\gamma+1}\|q\|_{H^\gamma(\mathbb R^3 \backslash \overline O)}\|\varphi\|_{H^{-\gamma}(\mathbb R^3 \backslash \overline O)}.
  \end{align*} By the density of $C_c^\infty(\mathbb R^3 \backslash \overline O)$ in $(H^\gamma(\mathbb R^3 \backslash \overline O))^*$, the above estimates hold for any $\varphi \in (H^\gamma(\mathbb R^3 \backslash \overline O))^*$. Therefore, we obtain 
  \[
  \|\chi_1 R(\lambda) \chi_2  q\|_{H^\gamma(\mathbb R^3 \backslash \overline O)} \le C e^{L(\Im \lambda)_-}|\lambda|^{-\gamma+1}\|q\|_{H^\gamma(\mathbb R^3 \backslash \overline O)}
  \] 
  for any real number $\gamma<0$ which completes the proof of resolvent estimates for $j=0$. The proof of the remaining cases for $j=1, 2$ follows the above arguments. The analyticity follows from Proposition \ref{nte} and the proof of \cite[Lemma 3.1]{WXZod}. The proof is complete.
\end{proof}

Denote the spatial support of $f(x, t)$ by $D$.
In the following theorem, we  show the pathwise existence and uniqueness of the solution for the stochastic scattering problem.
\begin{theorem}\label{DPT}
   The stochastic time-domain acoustic scattering problem \eqref{eqn} almost surely admits a unique solution  $u \in C^\infty(\mathbb R^+, L^2(B_R\setminus \overline{O})) \cap L^2(\mathbb R^+,L^2(B_R\setminus \overline{O}))$ with the following energy estimate \begin{equation}\label{eeee}
        \int_0^\infty \|u\|^2_{L^2(B_R\setminus \overline{O})}{\rm d}t \leq C \|\dot{W}_x\|^2_{H^{-3/2-\epsilon}(D)},
    \end{equation}
    where $C$ is a positive constant dependent on the strength $f$.
\end{theorem}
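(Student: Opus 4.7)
I would use the Laplace transform to reduce \eqref{eqn} to the frequency domain problem \eqref{helm}, apply the negative-index resolvent estimates from Proposition \ref{ntes} to the rough source $f_L\dot W_x$, and then invoke the abstract inverse Laplace transform Lemma \ref{lem:a}. Two preliminary inputs drive the whole scheme: (i) spatial white noise in three dimensions lies almost surely in $H^{-3/2-\epsilon}(D)$ for every $\epsilon>0$, which is precisely the negative-index space appearing in \eqref{eeee}; (ii) under the substitution $\lambda := \mathrm{i}s$, one has $-\Delta + s^2 = -\Delta - \lambda^2$, $\Im\lambda = \Re s > 0$, and the stated radiation condition $r(\partial_r u_L + s u_L)\to 0$ is the Sommerfeld condition for $\lambda$, so the outgoing solution of \eqref{helm} can be written as $u_L(\cdot,s) = R(\mathrm{i}s)[f_L(\cdot,s)\dot W_x]$.

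Next, I would pick cut-offs $\chi_2 \in C_c^\infty(\mathbb R^3\setminus\overline O)$ equal to one on a neighborhood of $D = \mathrm{supp}_x f$, and $\chi_1 \in C_c^\infty(\mathbb R^3\setminus\overline O)$ equal to one on $B_R\setminus\overline O$ with $\chi_1=1$ near $\mathrm{supp}\,\chi_2$; since $f_L\dot W_x = \chi_2 f_L\dot W_x$, Proposition \ref{ntes} with $\gamma = -3/2-\epsilon$ and $j=2$ would yield
\begin{equation*}
\|\chi_1 u_L(\cdot,s)\|_{H^{1/2-\epsilon}(\mathbb R^3\setminus\overline O)} \le C\,|s|^{9/2+\epsilon}\,e^{L(\Re s)_-}\,\|f_L(\cdot,s)\|_{C^N_x(D)}\,\|\dot W_x\|_{H^{-3/2-\epsilon}(D)},
\end{equation*}
where the multiplier $f_L(\cdot,s)$ is controlled on $H^{-3/2-\epsilon}$ by a fixed $C^N_x$ norm. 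The crucial observation is that since $f$ is smooth and compactly supported in $t$, repeated integration by parts in the definition of $f_L$ shows $\|f_L(\cdot,s)\|_{C^N_x}$ is Schwartz-class in $|\Im s|$ on any vertical strip, namely $\le C_{N,k}(1+|\Im s|)^{-k}$ for every $k\ge 0$.

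Armed with this polynomial (in $|s|$) bound, Lemma \ref{lem:a} immediately produces a unique causal distribution $u$ with Laplace transform $u_L$, giving existence and uniqueness. The smoothness $u\in C^\infty(\mathbb R^+; L^2)$ follows by applying the same polynomial-bound argument to $s^k u_L$ for every $k\ge 0$, absorbing the extra powers of $s$ using the Schwartz decay of $f_L$. The $L^2_t$ estimate \eqref{eeee} would then be obtained by Plancherel's identity
\begin{equation*}
\int_0^\infty e^{-2\sigma t}\|u(\cdot,t)\|_{L^2(B_R\setminus\overline O)}^2\,\mathrm{d}t = \frac{1}{2\pi}\int_{-\infty}^\infty \|u_L(\cdot,\sigma+\mathrm{i}\tau)\|_{L^2(B_R\setminus\overline O)}^2\,\mathrm{d}\tau,
\end{equation*}
letting $\sigma\to 0^+$ and using the Sobolev embedding $H^{1/2-\epsilon}\hookrightarrow L^2$ on the bounded set $B_R\setminus\overline O$; choosing $k$ sufficiently large makes the $\tau$-integral finite and proportional to $\|\dot W_x\|_{H^{-3/2-\epsilon}(D)}^2$.

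The principal obstacle is that classical variational Helmholtz estimates require $L^2$ data and are therefore unavailable here, so everything pivots on the negative-index Sobolev resolvent bounds delivered by Proposition \ref{ntes}. A secondary technical point is the Sobolev-multiplier estimate for $f_L(\cdot,s)$ on $H^{-3/2-\epsilon}$ together with its Schwartz-in-$\tau$ decay, which is what simultaneously enables Laplace inversion via Lemma \ref{lem:a}, the energy estimate \eqref{eeee}, and the temporal $C^\infty$ regularity.
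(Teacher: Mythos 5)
Your proposal is correct and follows essentially the same route as the paper: Laplace transform to the Helmholtz problem, the negative-index resolvent bounds of Proposition \ref{ntes} applied to $f_L\dot W_x$ with $\dot W_x\in H^{-3/2-\epsilon}$, arbitrary polynomial decay of $f_L$ in $|s|$ via integration by parts in $t$, Lemma \ref{lem:a} for causal inversion, and Parseval/Plancherel with $\sigma\to 0^+$ for the energy estimate and $C^\infty$-in-time regularity. The only cosmetic difference is that you take $\gamma=-3/2-\epsilon$, $j=2$ and then embed $H^{1/2-\epsilon}\hookrightarrow L^2$, whereas the paper embeds the source into $H^{-2}$ first and lands in $L^2$ directly.
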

\begin{proof}
The uniqueness of the solution follows from the uniqueness result for the deterministic problem (cf. \cite{colton}).

Rewrite the Helmholtz equation in \eqref{helm} as
\[
-\Delta u_L -\lambda^2u_L = f_L\dot{W}_x
\]
where $\lambda={\rm i}s$. By Proposition \ref{ntes} and noting $\Re s=\Im \lambda$, we have that there exists $\sigma_0>0$ such that $u_L$ is a homomorphic function
in the half-plane $\Re s>\sigma_0$. Moreover, since $f(x, t)$ is smooth and compactly supported in $D\times (0, T)$,
\begin{align}\label{decays}
f_L(x, s)&=\int_0^\infty f(x, t)e^{-st}{\rm d}t\notag\\
& = \int_0^\infty \frac{1}{(-s)^m}\frac{{\rm d}^m(e^{-st})}{{\rm d}t^m}f(x, t){\rm d}t=s^{-m}\int_0^\infty e^{-st}
\partial_t^m f(x, t){\rm d}t.
\end{align}
Then we have the following estimate for $s\in \{s: \Re s>\sigma_0\}$
\begin{align}\label{est_2}
\|u_L(\cdot, s)\|_{L^2(B_R\setminus \overline{O})} &\leq C|s|^{4} \|f_L \dot W_x\|_{H^{-2}(D)} \notag\\ &\le C|s|^{4}(1+|s|^2)^{-m/2}\|\dot{W}_x\|_{H^{-3/2-\epsilon}(D)},
\end{align}
where $m$ is any positive integer, and $C$ is a generic constant dependent on $m$ and $f$.
As a consequence, by Lemma \ref{lem:a} the inverse Laplace transform of $u_L$ exists and is supported in $[0, \infty)$, which yields
a distributional solution $u$ to the acoustic wave equation in \eqref{eqn}. 

In what follows, we consider the regularity of the solution. 
Denote $u:=\mathcal L^{-1}(u_L)$. 
Noticing $s=s_1 + {\rm i}s_2$ and the fact that $u_L=\mathcal L(u)=\mathcal F_{t \to s_2}(e^{-s_1t}u)$ where $\mathcal F_{t \to s_2}$ is the Fourier transform with respect to $t$, the estimate \eqref{est_2} implies $\mathcal F^{-1}_{s_2 \to t}(u_L) \in\cap_{m=0}^{\infty}H^m(\mathbb R,L^2(B_R\setminus \overline{O}))=C^\infty(\mathbb R,L^2(B_R\setminus \overline{O}))$. Thus, we have $u = e^{s_1t} \mathcal F^{-1}_{s_2 \to t}(u_L) \in C^\infty (\mathbb R,L^2(B_R\setminus \overline{O}))$. 
Consequently, combining the smoothness and the fact that $\overline{\text{supp}\, u(x,\cdot)} \subset [0,\infty)$ yields that $u$ satisfies the initial value conditions in \eqref{eqn}. 

Now we prove the energy estimate \eqref{eeee}.
By Parseval identity we have \[
\int_0^\infty e^{-2s_1t}\mathbb\|u\|^2_{L^2(B_R\backslash \overline O)}{\rm d}t = 2\pi\int_{-\infty}^{+\infty}\|u_L\|^2_{L^2(B_R \backslash \overline O)}{\rm d}s_2.
\]
Then applying \eqref{est_2} gives
\begin{align}\label{est1}
\int_0^\infty e^{-2s_1t}\|u\|^2_{L^2(B_R \backslash \overline O)}{\rm d}t &= 2\pi\int_{-\infty}^{+\infty}{\|u_L\|^2_{L^2(B_R\backslash \overline O)}}{\rm d}s_2\notag\\
&\le C^2 \|\dot{W}\|^2_{H^{-3/2-\epsilon}(D)}\int_{-\infty}^{+\infty}|s|^8(1+|s|^2)^{-m}{\rm d}s_2.
\end{align}
Letting $s_1\to 0$ in the estimate \eqref{est1} and $m$ be sufficiently large, we obtain $u \in L^2(\mathbb R^+,L^2(B_R))$ with the energy estimate \eqref{eeee}, which completes the proof.
\end{proof}

\section{PML method for the stochastic acoustic scattering problem}\label{pml}

In this section, we propose the PML method for solving the stochastic scattering problem and establish its convergence analysis.
The key ingredient in the analysis is deriving an approximation of the acoustic equation \eqref{eqn} by replacing the spatial white noise with its piecewise constant
approximation. 

Let
\[
\dot{W}_h = \sum_{K\in\mathcal T_h}|K|^{-1/2}\xi_K\chi_K(x),
\]
where $\xi_K = \frac{1}{\sqrt{|K|}}\int_K 1{\rm d}W(x)$. Here $\{\mathcal T_h\}$ is a family of triangulation of $B_R\setminus \overline{O}$, and $h\in(0, 1)$ is the mesh size.
We have from \cite{wal} that $\{\xi_K\}_{K\in\mathcal T_h}$ is a
family of independent identically distributed normal random variables with mean
0 and variance 1.
Let $u_L^{(h)}$ be the solution to the Helmholtz equation with source term $f_{L}\dot{W}_h$, that is,
\begin{align}\label{helma}
\begin{cases}
-\Delta u_L^{(h)} - k^2u_L^{(h)}  = f_{L}\dot{W}_h,&\quad x\in\mathbb R^3\setminus \overline{O},\\
u_L^{(h)} = 0, &\quad x\in\partial O,
\end{cases}
\end{align}
where $u_L^{(h)}$ satisfies the radiation condition.
It is known that $f_{L}\dot{W}_h\in L^2(D)$. Thus,
by applying Proposition \ref{nte} we have that $u_L^{(h)}(s)$ is also holomorphic in the half-plane $\Re(s)>\sigma_0$ with the  following estimate for $s\in \{s: \Re s>\sigma_0\}$
\[
\|u_L^{(h)}(s)\|_{L^2(B_R\setminus \overline{O})}\lesssim (1+|s|)^{-1}\|\dot{W}_h\|_{L^2(D)}.
\]
As a consequence, from Lemma \ref{lem:a} there exists a unique distributional solution $u^{(h)}$ to the following acoustic wave equation whose Laplace transform is $u_L^{(h)}$
\begin{align}\label{eqn_2}
\begin{cases}
\partial^2_t u^{(h)}(x, t)-  \Delta u^{(h)}(x, t) =  f(x, t)\dot{W}_h, \quad &(x, t)\in(\mathbb R^3\setminus \overline{O})\times [0, \infty),\\
u^{(h)}(x, t) =0, \quad &(x, t)\in\partial O\times [0, \infty),\\
u^{(h)}(x, 0) = \partial_tu^{(h)}(x, 0)=0, \quad &x\in \mathbb R^3\setminus \overline{O}.
\end{cases}
\end{align}

Recalling the proof of Proposition \ref{ntes}, the Green function of the Helmholtz equation \eqref{helm} can be decomposed as follows:
\begin{align*}
G_D(x, y; {\rm i}s) = G_0(x, y; {\rm i}s) + P(x, y; {\rm i}s),
\end{align*}
where $P(x, y; {\rm i}s)$ is a Lipschitz continuous function of $x$ and $y$ in $B_R\setminus\overline O$.

The following lemma is a direct consequence of \cite[Lemma 2]{CZZ}.
\begin{lemma}\label{2.2}
It holds the estimates for $y, z\in B_R\setminus\overline{O}$
\begin{align*}
\int_{B_R\setminus\overline{O}}|G_D(x, y; {\rm i}s) - G_D(x, z; {\rm i}s)|^2{\rm d}x\leq  C|y-z|, 
\end{align*}
where $C>0$ is a positive constant.
\end{lemma}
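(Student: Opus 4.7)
The plan is to exploit the decomposition $G_D(x,y;\mathrm{i}s) = G_0(x,y;\mathrm{i}s) + P(x,y;\mathrm{i}s)$ obtained in the proof of Proposition \ref{ntes} and to bound the contributions of $G_0$ and $P$ separately via
\[
|G_D(x,y;\mathrm{i}s) - G_D(x,z;\mathrm{i}s)|^2 \leq 2|G_0(x,y;\mathrm{i}s) - G_0(x,z;\mathrm{i}s)|^2 + 2|P(x,y;\mathrm{i}s) - P(x,z;\mathrm{i}s)|^2.
\]

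The contribution of $P$ is easy: since $P$ is Lipschitz in its spatial arguments on $B_R\setminus\overline{O}$ as noted just above the lemma, one has $|P(x,y;\mathrm{i}s) - P(x,z;\mathrm{i}s)| \leq C|y-z|$, and integrating the square over the bounded domain $B_R\setminus\overline{O}$ yields a bound of order $|y-z|^2$, which is dominated by $C|y-z|$ whenever $|y-z|$ stays bounded (as it does since $y,z\in B_R\setminus\overline{O}$).

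The harder contribution involves the free-space kernel $G_0(x,y;\mathrm{i}s) = \frac{e^{-s|x-y|}}{4\pi|x-y|}$, which realizes the square-root rate and is the content of \cite[Lemma 2]{CZZ}. The strategy is to split the integration domain into the near region $N = \{x\in B_R\setminus\overline{O} : \min(|x-y|,|x-z|) \leq 2|y-z|\}$ and the far region $F = (B_R\setminus\overline{O})\setminus N$. On $F$ both singularities are separated from $x$, so the mean value theorem gives $|G_0(x,y;\mathrm{i}s) - G_0(x,z;\mathrm{i}s)| \lesssim |y-z|/|x-y|^2$ (with $s$-dependent constants absorbed into $C$); squaring and using the three-dimensional radial integral $\int_{2|y-z|}^{R'} r^{-4}\cdot r^2\,\mathrm{d}r \lesssim |y-z|^{-1}$ produces a contribution of order $|y-z|^2 \cdot |y-z|^{-1} = |y-z|$. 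On $N$, the crude bounds $|G_0(x,y;\mathrm{i}s)|^2 \lesssim |x-y|^{-2}$ and $|G_0(x,z;\mathrm{i}s)|^2 \lesssim |x-z|^{-2}$, together with $\int_{|x-y|\leq 2|y-z|} |x-y|^{-2}\,\mathrm{d}x \lesssim |y-z|$ in three dimensions, close the estimate.

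The main obstacle is purely the bookkeeping of the free-space piece: one must check that the exponent in the pointwise gradient bound $|\nabla_y G_0| \sim |x-y|^{-2}$ balances with the dimension so that the radial integrals on $N$ and $F$ both return exactly a first power of $|y-z|$, rather than an unfavorable power. Once those elementary calculations are assembled, the decomposition cleanly isolates the rough Newtonian singularity in $G_0$ from the smoother obstacle correction $P$, and the claimed estimate follows at once.
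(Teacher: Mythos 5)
Your argument is correct and follows essentially the same route as the paper, which simply invokes the decomposition $G_D=G_0+P$ with $P$ Lipschitz and cites \cite[Lemma 2]{CZZ} for the free-kernel contribution; your near/far splitting is exactly the standard proof of that cited estimate, and the exponent bookkeeping ($\int_0^{\epsilon}r^{-2}\,r^2\,\mathrm{d}r\sim\epsilon$ near the singularity and $|y-z|^2\int_{\epsilon}^{R'}r^{-4}\,r^2\,\mathrm{d}r\sim|y-z|^2\epsilon^{-1}$ away from it, with $\epsilon\sim|y-z|$) checks out. The only point worth flagging is that your far-field constant inherits a dependence on $s$ through $\nabla_y e^{-s|x-y|}$, but the lemma as stated makes no uniformity claim in $s$, so this is consistent with the paper.
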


In the following lemma, we prove an error estimate between $u_L$ and $u_L^{(h)}$. The proof is motivated by \cite[Theorem 1]{CZZ}.
It should be noticed that the explicit dependence on the strength function $f_L$ is derived in our result, which plays an important role in the subsequent analysis.

\begin{lemma}
Let $u_L$ and $u_L^{(h)}$ be the solutions to the Helmholtz equations \eqref{helm} and \eqref{helma}, respectively.
It holds the estimate
\begin{align}\label{3.2}
\mathbb E[\|u_L - u_L^{(h)}\|^2_{L^2(B_R\setminus \overline{O})}]\leq
C\|f_L(\cdot, s)\|_{C^1(B_R\setminus \overline{O})}h,
\end{align}
where $C$ is a positive constant independent of $u_L$ and $h$.
\end{lemma}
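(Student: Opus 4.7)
The plan is to represent both $u_L$ and $u_L^{(h)}$ via the Dirichlet Green function $G_D$, write their difference as a single stochastic integral against $\dot W_x$, and bound the result using the It\^o--Wiener isometry together with Lemma \ref{2.2}.

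First I would fix $s$ with $\Re s > \sigma_0$ and set $g(x, y) := G_D(x, y; {\rm i}s)\, f_L(y, s)$. Using the Green function representation established in the proof of Proposition \ref{ntes}, one has
\[
u_L(x) = \int_{B_R\setminus\overline{O}} g(x, y)\,{\rm d}W(y),
\]
while unpacking $\dot W_h = \sum_{K} |K|^{-1/2}\xi_K\chi_K$ together with $\xi_K = |K|^{-1/2}\int_K {\rm d}W$ gives
\[
u_L^{(h)}(x) = \sum_{K\in\mathcal T_h} \bar g_K(x)\int_K {\rm d}W(z), \qquad \bar g_K(x) := |K|^{-1}\int_K g(x, y)\,{\rm d}y.
\]
Subtracting and using disjointness of the cells yields
\[
u_L(x) - u_L^{(h)}(x) = \sum_{K\in\mathcal T_h}\int_K\bigl[g(x,y)-\bar g_K(x)\bigr]\,{\rm d}W(y).
\]

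Next, the It\^o--Wiener isometry for spatial white noise together with Cauchy--Schwarz yields
\[
\mathbb E\bigl[|u_L(x)-u_L^{(h)}(x)|^2\bigr] = \sum_K\int_K|g(x,y)-\bar g_K(x)|^2\,{\rm d}y \leq \sum_K|K|^{-1}\int_K\int_K|g(x,y)-g(x,z)|^2\,{\rm d}z\,{\rm d}y,
\]
and integrating in $x$ over $B_R\setminus\overline O$ reduces the problem to bounding $I(y,z) := \int_{B_R\setminus\overline O}|g(x,y)-g(x,z)|^2\,{\rm d}x$ uniformly for $y,z$ in a common cell. I would split
\[
g(x,y) - g(x,z) = G_D(x,y;{\rm i}s)\bigl(f_L(y,s) - f_L(z,s)\bigr) + \bigl(G_D(x,y;{\rm i}s)-G_D(x,z;{\rm i}s)\bigr)f_L(z,s),
\]
dominate the first piece by $\|f_L(\cdot,s)\|_{C^1}^2\,|y-z|^2\int_{B_R\setminus\overline O}|G_D(x,y;{\rm i}s)|^2\,{\rm d}x$ via the mean value theorem on $f_L$, and invoke Lemma \ref{2.2} directly to control the second piece by $C\|f_L(\cdot,s)\|_{C^0}^2\,|y-z|$. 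Since $|y-z|\leq Ch$ inside a cell, the linear-in-$|y-z|$ contribution dominates, and summing $\sum_K |K|^{-1}\cdot|K|^2 = |B_R\setminus\overline{O}|$ produces the asserted estimate \eqref{3.2}.

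The main obstacle is establishing the auxiliary uniform bound $\int_{B_R\setminus\overline{O}}|G_D(x,y;{\rm i}s)|^2\,{\rm d}x \leq C$ in $y$. Writing $G_D = G_0 + P$ via the decomposition already used in Proposition \ref{ntes}, the fundamental solution satisfies $|G_0(x,y;{\rm i}s)|\lesssim |x-y|^{-1}$ and hence lies in $L^2$ near the diagonal in three dimensions, while $P(\cdot,y;{\rm i}s)$ is controlled in $H^1(B_R\setminus\overline O)$ locally uniformly in $y$ by the same argument used there. With this in hand the remaining computation is the standard white-noise approximation analysis in the spirit of \cite{CZZ}, and the only departure from that reference is the presence of the smooth weight $f_L$ which produces the explicit $C^1$-dependence on $f_L$ required for the convergence analysis of the PML method in the next section.
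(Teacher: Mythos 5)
Your proposal is correct and follows essentially the same route as the paper's proof: the Green function representation of both solutions, the It\^o isometry applied to the difference written as a sum of cell-wise stochastic integrals, Cauchy--Schwarz to reduce to the double integral of $|g(x,y)-g(x,z)|^2$ over each cell, the product-rule splitting handled by the $C^1$ bound on $f_L$ together with the $L^2$-in-$x$ integrability of $G_D$, and Lemma \ref{2.2} for the Green function increment. The only difference is cosmetic --- you make explicit the auxiliary uniform bound $\int_{B_R\setminus\overline O}|G_D(x,y;{\rm i}s)|^2\,{\rm d}x\leq C$ via the decomposition $G_D=G_0+P$, which the paper uses implicitly by writing the $|x-z|^{-2}$ singularity directly.
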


\begin{proof}
Given a function $g$, denote
\begin{align*}
(\mathcal G g)(x) = \int_{B_R\setminus\overline{O}} G_D(x, y; {\rm i}s)g(y){\rm d}y.
\end{align*}
By subtracting \eqref{helm} from \eqref{helma} we obtain
\begin{align*}\label{subt}
u - u_h  = E_h,
\end{align*}
where
\[
E_h =\mathcal{G}(f_L\dot{W}_x)-\mathcal{G}(f_L\dot{W}_h).
\]
From Ito isometry we obtain
\begin{align*}
&\mathbb E[\|\mathcal{G}(f_L\dot{W}_x) - \mathcal{G}(f_L\dot{W}_h)\|^2_{L^2(B_R\setminus \overline{O})}]\\
&=\mathbb E\Big[\int_{B_R\setminus \overline{O}}\Big(\int_{B_R\setminus \overline{O}}f_L(y, s)G_D(x, y; {\rm i}s){\rm d}W(y) - \int_{B_R\setminus \overline{O}}f_L(y, s)G_D(x, y; {\rm i}s){\rm d}W_h(y)\Big)^2{\rm d}x\Big]\\
&=\mathbb E\Big[\int_{B_R\setminus \overline{O}}\Big(\sum_{K\in\mathcal{T}_h}\int_{K}f_L(y, s)G_D(x, y; {\rm i}s){\rm d}W(y)\\
&\quad - |K|^{-1}\sum_{K\in\mathcal{T}_h}\int_{K} f_L(z, s)G_D(x, z; {\rm i}s){\rm d}z\int_K1{\rm d}W(y)\Big)^2{\rm d}x\Big]\\
&=\mathbb E\Big[\int_{B_R\setminus \overline{O}}\Big(\sum_{K\in\mathcal T_h}\int_{K}\int_K|K|^{-1}(f_L(y, s)G_D(x, y; {\rm i}s)-f_L(z, s)G_D(x, z; {\rm i}s)){\rm d}z{\rm d}W(y)\Big)^2{\rm d}x\Big]\\
&=\int_{B_R\setminus \overline{O}}\Big(\sum_{K\in\mathcal T_h}\int_{K}\Big( |K|^{-1}\int_K(f_L(y, s)G_D(x, y; {\rm i}s)-f_L(z, s)G_D(x, z; {\rm i}s)){\rm d}z\Big)^2{\rm d}y\Big){\rm d}x,
\end{align*}
which yields from the H\"older inequality
\begin{align}\label{lip}
&\mathbb E[\|\mathcal{G}(f_L\dot{W}_x) - \mathcal{G}(f_L\dot{W}_h)\|^2_{L^2(B_R\setminus \overline{O})}]\notag\\
&\leq \int_{B_R\setminus \overline{O}}\Big(\sum_{K\in\mathcal T_h}|K|^{-1}\int_{K}\int_K|f_L(y, s)G_D(x, y; {\rm i}s)-f_L(z, s)G_D(x, z; {\rm i}s)|^2{\rm d}z{\rm d}y\Big){\rm d}x.
\end{align}
Since
\begin{align*}
&|f_L(y, s)G_D(x, y; {\rm i}s)-f_L(z, s)G_D(x, z; {\rm i}s)|\\
&\leq f_L(y, s)|G_D(x, y; {\rm i}s)-G_D(x, z; {\rm i}s)| + |f_L(y, s) - f_L(z, s)||G_D(x, z; {\rm i}s)|,
\end{align*}
we obtain from \eqref{lip} and Lemma \ref{2.2} that
\begin{align*}
&\mathbb E[\|\mathcal{G}(f_L\dot{W}_x) - \mathcal{G}(f_L\dot{W}_h)\|^2_{L^2(B_R\setminus \overline{O})}]\notag\\
&\leq C\Big(\|f_L(\cdot, s)\|_{C^1(B_R\setminus \overline{O})}\int_{B_R\setminus \overline{O}}\Big(\sum_{K\in\mathcal T_h}|K|^{-1}\int_{K}\int_K\frac{|y-z|^2}{|x-z|^2}{\rm d}z{\rm d}y\Big){\rm d}x \notag\\
&\quad+ \int_{B_R\setminus \overline{O}}\Big(\|f_L(\cdot, s)\|_{C(B_R\setminus \overline{O})}\sum_{K\in\mathcal T_h}|K|^{-1}\int_{K}\int_K|G_D(x, y; {\rm i}s)-G_D(x, z; {\rm i}s)|^2{\rm d}z{\rm d}y\Big){\rm d}x\Big)\notag\\
&\leq C\|f_L(\cdot, s)\|_{C^1(B_R\setminus \overline{O})}\sum_{K\in\mathcal T_h}|K|^{-1}\int_{B_R\setminus \overline{O}}\int_{K}\int_K\frac{|y-z|^2}{|x-z|^2}{\rm d}z{\rm d}y{\rm d}x \\
&\quad+ C\|f_L(\cdot, s)\|_{C(B_R\setminus \overline{O})}\sum_{K\in\mathcal T_h}|K|^{-1}\int_{B_R\setminus \overline{O}}\int_{K}\int_K|G_D(x, y; {\rm i}s)-G_D(x, z; {\rm i}s)|^2{\rm d}z{\rm d}y{\rm d}x\\
&\leq C\|f_L(\cdot, s)\|_{C^1(B_R\setminus \overline{O})}h^2   \\
&\quad +C\|f_L(\cdot, s)\|_{C(B_R\setminus \overline{O})}\sum_{K\in\mathcal T_h}|K|^{-1}\int_{B_R\setminus \overline{O}}\int_{K}\int_K|G_D(x, y; {\rm i}s)-G_D(x, z; {\rm i}s)|^2{\rm d}z{\rm d}y{\rm d}x.
\end{align*}
Therefore, the estimates in \eqref{3.2} follow from the above inequality and Lemma \ref{2.2}.
\end{proof}

In the following theorem, we establish an error estimate of $u - u^{(h)}$. As a consequence, to solve the scattering problem \eqref{eqn}, one might compute $u_h$ by using the classical
computational methods such as PML  and absorbing boundary condition methods,  since the driven source in the wave equation \eqref{eqn_2} is now regular in $C^\infty([0, T], L^2(B_R\setminus\overline{O}))$. 
\begin{theorem}\label{approxi}
Let $u$ and $u^{(h)}$ be the solutions to the acoustic wave equations \eqref{eqn} and \eqref{eqn_2}, respectively.
It holds the estimate
\begin{align*}
\mathbb E\Big[\|u-u^{(h)}\|^2_{L^2(0, T; L^2(B_R\setminus \overline{O}))}\Big]\leq 
Ch,
\end{align*}
where $C$ is a positive constant independent of $u$ and $h$.
\end{theorem}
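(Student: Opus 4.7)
The plan is to mirror the pathway used in the proof of Theorem \ref{DPT}: reduce the time-domain error estimate to a frequency-domain one via Parseval's identity, then invoke the per-frequency bound \eqref{3.2} and integrate in the dual variable, exploiting the rapid decay of $f_L$ in $s$ that comes from the smoothness and compact temporal support of $f$.

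First I would fix $s_1>\sigma_0$ and write, using Parseval's identity (exactly as in the proof of Theorem \ref{DPT}),
\begin{equation*}
\int_0^\infty e^{-2s_1 t}\,\mathbb E\bigl[\|u-u^{(h)}\|^2_{L^2(B_R\setminus\overline{O})}\bigr]\,\mathrm{d}t
= 2\pi\int_{-\infty}^{+\infty}\mathbb E\bigl[\|u_L-u_L^{(h)}\|^2_{L^2(B_R\setminus\overline{O})}\bigr]\,\mathrm{d}s_2,
\end{equation*}
where the interchange of expectation and integration is justified by Fubini and the nonnegativity of the integrand. The previous lemma supplies the pointwise-in-$s$ bound $\mathbb E[\|u_L-u_L^{(h)}\|^2_{L^2}]\le C\|f_L(\cdot,s)\|_{C^1(B_R\setminus\overline{O})}\,h$. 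The crucial ingredient to make the $s_2$-integral converge is the decay of $f_L$: since $f\in C^\infty$ with compact support in $D\times(0,T)$, integration by parts $m$ times in $t$ (as in \eqref{decays}, applied also to spatial derivatives of $f$) yields $\|f_L(\cdot,s)\|_{C^1(B_R\setminus\overline{O})}\le C_m(1+|s|)^{-m}$ for every $m\in\mathbb{N}$ and every $s$ with $\Re s>\sigma_0$.

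Choosing $m$ sufficiently large makes $\int_{-\infty}^{+\infty}(1+|s|)^{-m}\mathrm{d}s_2$ finite uniformly in $s_1\in(0,\sigma_0+1]$, so combining the two displayed bounds gives
\begin{equation*}
\int_0^\infty e^{-2s_1 t}\,\mathbb E\bigl[\|u-u^{(h)}\|^2_{L^2(B_R\setminus\overline{O})}\bigr]\,\mathrm{d}t\le Ch.
\end{equation*}
To pass to the desired estimate on the finite interval $(0,T)$, I would bound the left-hand side below by $e^{-2s_1 T}\int_0^T\mathbb E[\|u-u^{(h)}\|^2]\mathrm{d}t$ and then let $s_1\to 0^+$, using the regularity established in Theorem \ref{DPT} and its analogue for $u^{(h)}$ to legitimize the limit by monotone convergence.

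The main obstacle is of a bookkeeping nature rather than a conceptual one: the per-frequency bound \eqref{3.2} contains the factor $\|f_L(\cdot,s)\|_{C^1}$, which a priori could grow in $|s|$, so one must verify carefully that the smoothness and compact support of $f$ transfer to the required polynomial decay of $f_L$ \emph{together with its spatial derivatives}, and that this decay is uniform for $\Re s$ in a small strip above $0$ so that the $s_1\to 0^+$ limit is legitimate. Once this is in place the rest is routine.
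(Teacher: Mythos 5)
Your proposal follows essentially the same route as the paper's proof: Parseval's identity reduces the problem to the frequency domain, the per-frequency bound \eqref{3.2} is combined with the rapid decay of $f_L$ coming from \eqref{decays}, and one integrates in $s_2$ and lets $s_1\to 0$ before restricting to $(0,T)$. Your additional care about the decay of $\|f_L(\cdot,s)\|_{C^1}$ and the legitimacy of the $s_1\to 0^+$ limit only makes explicit steps the paper leaves implicit, so the argument is correct and matches the paper's.
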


\begin{proof}
Notice that
\[
u_L - u_L^{(h)} = \mathcal{L}(u - u^{(h)}) = \mathcal{F}(e^{-s_1t}u),
\]
where $\mathcal{F}$ is the Fourier transform in $s_2$. By Parseval identity we have
\[
\int_0^\infty e^{-2s_1t}\mathbb\|u-u_h\|^2_{L^2(B_R\setminus \overline{O})}{\rm d}t = 2\pi\int_{-\infty}^{+\infty}\mathbb \|u_L - u_L^{(h)}\|^2_{L^2(B_R\setminus \overline{O})}{\rm d}s_2.
\]
Therefore, by taking expectation and in view of the estimates \eqref{3.2} and \eqref{decays}, we obtain
\begin{align}\label{est}
&\int_0^\infty e^{-2s_1t}\mathbb E[\|u-u_h\|^2_{L^2(B_R\setminus \overline{O})}]{\rm d}t \notag\\
&= 2\pi\int_{-\infty}^{+\infty}\mathbb E[\|u_L - u_L^{(h)}\|^2_{L^2(B_R\setminus \overline{O})}]{\rm d}s_2\notag\\
&\leq 
Ch\int_{-\infty}^{+\infty}(1+|s|^2)^{-m/2}{\rm d}s_2,
\end{align}
where $m$ can be any positive integer.
Letting $s_1\to 0$ in the estimate \eqref{est} and $m$ be sufficiently large, we obtain
\begin{align}\label{est2}
\int_0^\infty \mathbb E[\|u-u_h\|^2_{L^2(B_R\setminus \overline{O})}]{\rm d}t \leq Ch.
\end{align}
In view of \eqref{est2} we obtain for any $T>0$ that
\begin{align*}
\mathbb E\Big[\|u-u_h\|^2_{L^2(0, T; L^2(B_R\setminus \overline{O}))}\Big]=\int_0^T \mathbb E[\|u-u_h\|^2_{L^2(B_R\setminus \overline{O})}]{\rm d}t 
\leq 
Ch.
\end{align*}
The proof is complete.
\end{proof}

Based on Theorem \ref{approxi}, we propose a PML method and derive the convergence analysis. Define $B_\rho:= \{x \in \mathbb{R}^3 : |x| < \rho\}$
with $\rho>R$. Introduce the PML layer $\Omega_{PML} = B_\rho\setminus\overline{B_R} =  \{x \in \mathbb{R}^3 : R<|x| < \rho\}$ with thickness
$d=\rho-R$ which surrounds the bounded domain $B_R\setminus\overline{O}$. Let $s_1>0$ be an arbitrarily fixed parameter which can be
considered as the real part of the Laplace transform variable.
Define the PML medium property
as follows
\[
\alpha(r) = 1 + s_1^{-1}\sigma(r),\quad r=|x|,
\]
where
\begin{align*}
\sigma(r)=
\begin{cases}
0,  &\quad 0\leq r\leq R, \\[5pt]
\sigma_0\Big(\frac{r-R}{\rho-R}\Big)^m, &\quad R\leq r\leq\rho,\\[5pt]
\sigma_0, &\quad 0\leq r\leq\infty.
\end{cases}
\end{align*}
Here $\sigma_0>0$ is a positive constant and $m\geq 1$ is an integer. 
Following the real coordinate stretching technique to derive the PML equation, we introduce the real stretched radius
\[
\tilde{r} = \int_0^r\alpha(\tau){\rm d}\tau = r\beta,
\]
where
\[
\beta(r)=\frac{1}{r}\int_0^r\alpha(\tau){\rm d}\tau.
\]
The truncated time-domain PML problem is formulated as follows
\begin{align}\label{PMLS}
\begin{cases}
\alpha\beta^2\partial_{tt}u^{(h)}_P - \nabla\cdot(A\nabla u^{(h)}_P) = f(x, t)\dot{W}_h,&\quad (x, t)\in (B_\rho\setminus\overline{O})\times (0, T],\\
u^{(h)}_P=0, &\quad (x, t)\in \partial O\times (0, T],\\
u^{(h)}_P=0, &\quad (x, t)\in \partial B_\rho\times (0, T],\\
u^{(h)}_P(x, 0) = \partial_t U_p(x, 0) = 0, &\quad x\in B_\rho\setminus\overline{O},
\end{cases}
\end{align}
where $A = {\rm diag}(\beta^2/\alpha, \alpha, \alpha)$ is the diagonal matrix under the polar coordinates. 

The following convergence analysis of the PML method was proved in \cite{lv}.

\begin{proposition}\label{lv}
Let $u^{(h)}$ and $u^{(h)}_P$ be the solutions to the wave equations \eqref{eqn_2} and \eqref{PMLS}, respectively, and $s_1 = 1/T$.
Then the following estimate holds:
\[
\|u^{(h)} - u^{(h)}_P\|_{L^\infty(0, T; H^1(B_R\setminus\overline{O}) )}\lesssim T^{7/2}d(1+\sigma_0T)^{11}e^{-\sigma_0d/2}\|f(x, t)\dot{W}_h\|_{H^6(0, T; L^2(B_R\setminus\overline{O}))}.
\]
\end{proposition}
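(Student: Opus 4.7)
The plan is to transfer the analysis to the frequency domain via the Laplace transform, carry out a standard PML convergence analysis for the resulting complex-frequency Helmholtz problem, and then come back to the time domain using Parseval's identity, exactly in the spirit of what was done for the well-posedness result in Theorem \ref{DPT}. Concretely, I would write $w_L = u_L^{(h)} - u_{L,P}^{(h)}$, where $u_{L,P}^{(h)}$ is the Laplace transform of $u_P^{(h)}$ in time, and derive a frequency-domain estimate of the form
\[
\|w_L(\cdot,s)\|_{H^1(B_R\setminus\overline O)} \lesssim (1+\sigma_0/s_1)^{11}\,e^{-\sigma_0 d/2}\,\|f_L(\cdot,s)\dot W_h\|_{L^2(B_R\setminus\overline O)},
\]
uniformly for $s$ on the line $\Re s = s_1 = 1/T$. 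Combined with the standard decay of $f_L$ in $|s|$ coming from integration by parts in \eqref{decays} applied to $\partial_t^k f$ for $k$ up to $6$, Parseval's identity then gives an $L^\infty_t H^1_x$ bound after integration over the frequency axis, and the prefactor $T^{7/2}$ emerges from the Sobolev embedding $H^{\lceil 1/2\rceil +}(\mathbb R)\hookrightarrow L^\infty(\mathbb R)$ applied in the time variable together with the shift $s_1 = 1/T$.

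For the frequency-domain step itself, I would use the complex coordinate stretching $\tilde r = r\beta(r)$ with $\alpha = 1 + s^{-1}\sigma$. Following the well-known two-step PML analysis, I would first observe that the \emph{non-truncated} PML Helmholtz problem in $\mathbb R^3\setminus\overline O$, obtained by replacing $\partial B_\rho$ with outgoing behaviour at infinity, has a solution that coincides with $u_L^{(h)}$ on $B_R$; this is the defining property of the coordinate-stretched extension of outgoing solutions. Hence the entire error arises from the artificial Dirichlet condition at $\partial B_\rho$, and can be represented as an outgoing field whose trace on $\partial B_\rho$ equals the restriction of the stretched outgoing solution. The latter decays like $e^{-\Re(s\tilde r)}$ outside $B_R$, so that the trace norm on $\partial B_\rho$ is bounded by $e^{-s_1 d\sigma_0/(2s_1)} = e^{-\sigma_0 d/2}$ (up to harmless powers of $s$ and $\sigma_0/s_1$), using $\Re(s\tilde r)\ge \int_R^\rho \sigma(\tau)\,\mathrm d\tau \ge \sigma_0 d/2$ for the choice of $\sigma$ in \eqref{PMLS}. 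The remaining scattering problem in $B_\rho\setminus\overline O$ with this small boundary data is solved using the resolvent-type estimates of Proposition \ref{ntes}, whose norm grows polynomially in $|s|$; tracking the dependence on $s_1$ through $\alpha=1+s^{-1}\sigma$ produces the factor $(1+\sigma_0 T)^{11}$.

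After the frequency-domain estimate is in place, the transition back to time uses the identity
\[
\int_0^T e^{-2s_1 t}\|u^{(h)}(\cdot,t)-u_P^{(h)}(\cdot,t)\|_{H^1(B_R\setminus\overline O)}^2\,\mathrm d t = 2\pi\int_{-\infty}^\infty \|w_L\|_{H^1(B_R\setminus\overline O)}^2\,\mathrm d s_2,
\]
combined with the $L^\infty_t$ upgrade obtained by differentiating in $t$ sufficiently many times, which is precisely where the $H^6(0,T;L^2)$ regularity of the source is needed to guarantee integrability of $(1+|s|)^{-m}$ on the frequency axis. Choosing $s_1 = 1/T$ optimizes the growth of $e^{2s_1 T}$ against the polynomial prefactors, leaving $T^{7/2}d(1+\sigma_0 T)^{11}$.

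The main obstacle, in my view, is bookkeeping of the sharp polynomial dependence on $T$, $\sigma_0$, and $d$: the factor $(1+\sigma_0 T)^{11}$ requires one to track how the matrix coefficients $A={\rm diag}(\beta^2/\alpha,\alpha,\alpha)$ enter both the a priori estimates and the boundary layer representation, and to do so uniformly over $s$ with $\Re s = 1/T$. A second subtle point is carefully justifying that the non-truncated PML field reproduces the outgoing solution on $B_R$ in the present obstacle geometry — this is standard for the free-space problem but needs the analyticity and outgoing character of $u_L^{(h)}$ established via Proposition \ref{ntes}, which is why the resolvent framework of Section \ref{wp} must be invoked here in a quantitative form rather than a qualitative one.
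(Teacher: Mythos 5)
The first thing to say is that the paper does not prove this proposition at all: it is imported verbatim from the cited reference \cite{lv} (Liang--Lv--Niu), and the only thing the present paper needs to check is that the right-hand side $f\dot W_h$ has the $H^6(0,T;L^2)$ regularity required to invoke that result (which it does, pathwise, since $f$ is smooth and $\dot W_h\in L^2$). So there is no in-paper proof to compare against; your outline should be measured against the strategy of \cite{lv}, which is indeed the one you describe: Laplace transform with $\Re s=s_1$, exponential decay of the coordinate-stretched outgoing field across the layer, stability of the truncated frequency-domain PML problem, and Plancherel inversion with the choice $s_1=1/T$. At that level your sketch is the right route.

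Two substantive caveats. First, your frequency-domain analysis uses the complex stretching $\alpha=1+s^{-1}\sigma$, but the PML system \eqref{PMLS} is built from the \emph{real} stretching $\alpha=1+s_1^{-1}\sigma$ with $s_1$ a fixed real parameter; this is precisely what makes the coefficients $A=\mathrm{diag}(\beta^2/\alpha,\alpha,\alpha)$ real and $s$-independent, so that \eqref{PMLS} is a genuine second-order-in-time PDE rather than a convolution system. The Laplace transform of \eqref{PMLS} is the Helmholtz-type equation with the $s_1$-stretched coefficients, not the $s$-stretched ones, and the stability analysis on the line $\Re s=s_1$ must be carried out for that operator (this is where the uniform-in-$s_2$ estimates and the factor $(1+\sigma_0 T)^{11}$ actually come from). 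Second, the entire content of the proposition is the explicit exponents $T^{7/2}$, $(1+\sigma_0T)^{11}$ and $e^{-\sigma_0 d/2}$; a sketch that says these ``emerge from tracking the dependence'' has not established the statement, and your bound $\int_R^\rho\sigma(\tau)\,\mathrm d\tau\ge\sigma_0 d/2$ in fact only holds for $m=1$ (in general the integral equals $\sigma_0 d/(m+1)$), so even the exponential rate needs the specific normalization used in \cite{lv}. If you intend to reprove the proposition rather than cite it, these quantitative steps are the proof, not bookkeeping around it.
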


Noting $\mathbb E[\|\dot{W}_h\|^2_{L^2(D)}]\sim h^{-2}$ (cf. \cite[Lemma 1]{CZZ}),
as a direct consequence of Proposition \ref{lv} by taking expectation, we obtain the following estimate in the computational domain.
\begin{lemma}\label{exlv}
It holds that 
\[
\mathbb E[\|u^{(h)}-u_P^{(h)}\|_{L^\infty(0, T; L^2(B_R\setminus\overline{O}) )}]\leq Ch^{-2}T^{7/2}d(1+\sigma_0T)^{11}e^{-\sigma_0d/2},
\] where  $C>0$  is a  constant independent of  $T, \sigma_0$,  $d$ and $h$.
\end{lemma}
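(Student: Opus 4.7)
The plan is to derive the stated inequality as a direct expectation-averaged consequence of Proposition \ref{lv}, using the cited second-moment estimate $\mathbb{E}[\|\dot W_h\|^2_{L^2(D)}] \lesssim h^{-2}$ of \cite[Lemma 1]{CZZ}.

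First, since $H^1(B_R\setminus\overline O)$ embeds continuously into $L^2(B_R\setminus\overline O)$, Proposition \ref{lv} applied pathwise yields
\begin{equation*}
\|u^{(h)}-u_P^{(h)}\|_{L^\infty(0,T;L^2(B_R\setminus\overline O))} \lesssim T^{7/2}d(1+\sigma_0 T)^{11}e^{-\sigma_0 d/2}\,\|f\dot W_h\|_{H^6(0,T;L^2(B_R\setminus\overline O))}.
\end{equation*}
Taking expectation, the deterministic prefactor pulls out and it suffices to show that $\mathbb{E}\bigl[\|f\dot W_h\|_{H^6(0,T;L^2)}\bigr]$ is controlled by a constant multiple of $h^{-2}$.

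Second, I would reduce this mixed Sobolev norm to a spatial moment of $\dot W_h$. Since $\dot W_h$ carries no time dependence, all temporal derivatives fall on $f$, namely $\partial_t^k(f\dot W_h)=(\partial_t^k f)\dot W_h$. Expanding
\begin{equation*}
\|f\dot W_h\|_{H^6(0,T;L^2(B_R\setminus\overline O))}^2 = \sum_{k=0}^6 \int_0^T\!\!\int_{D} |\partial_t^k f(x,t)|^2\,|\dot W_h(x)|^2\,\mathrm{d}x\,\mathrm{d}t
\end{equation*}
and invoking $f\in C^\infty(\mathbb R^3\times\mathbb R_+)$ with $\mathrm{supp}\,f\subset\subset D\times(0,T)$ to bound each $\|\partial_t^k f(\cdot,t)\|_{L^\infty(D)}$ uniformly in $t$ for $0\le k\le 6$, one extracts the pathwise estimate $\|f\dot W_h\|_{H^6(0,T;L^2)}^2 \lesssim C(f,T)\,\|\dot W_h\|_{L^2(D)}^2$. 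Taking expectation of the square root, combining Jensen's inequality $\mathbb{E}[X]\le(\mathbb{E}[X^2])^{1/2}$ with the second-moment estimate $\mathbb E[\|\dot W_h\|_{L^2(D)}^2]\lesssim h^{-2}$, and substituting into the bound from the previous paragraph yields the claim.

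The argument is essentially mechanical once Proposition \ref{lv} is in hand; the only step that needs a small amount of care is the factorization of $\|f\dot W_h\|_{H^6(0,T;L^2)}$ into a bound involving only $\|\dot W_h\|_{L^2(D)}$. This factorization is transparent because $\dot W_h$ is time-independent and $f$ is smooth with compact support in both variables, so no genuine obstacle is expected.
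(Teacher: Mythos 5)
Your proof is correct and follows essentially the same route as the paper, which simply takes the expectation of the pathwise estimate in Proposition \ref{lv} and invokes $\mathbb E[\|\dot W_h\|^2_{L^2(D)}]\sim h^{-2}$; your write-up merely supplies the details the paper leaves implicit (the embedding $H^1\hookrightarrow L^2$, the factorization of the $H^6(0,T;L^2)$ norm using the time-independence of $\dot W_h$ and the smoothness and compact support of $f$, and Jensen's inequality). Note that Jensen actually yields the sharper factor $h^{-1}$, which implies the stated $h^{-2}$ bound since $h\in(0,1)$.
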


With the help of Proposition \ref{approxi} and Lemma \ref{exlv}, we establish the convergence of the time-domain PML method. 
\begin{theorem}\label{main}
Let $u$ and $u_P^{(h)}$ denote the solutions to the original and PML-truncated acoustic wave problems \eqref{eqn} and \eqref{PMLS}, respectively. Then
\begin{align*}
\mathbb E[\|u - u_P^{(h)}\|^2_{L^2(0, T; L^2(B_R\setminus\overline{O}))}]\leq Ch + Ch^{-2}T^{7/2}d(1+\sigma_0T)^{11}e^{-\sigma_0d/2},
\end{align*}
where  $C>0$  is a  constant independent of  $T, \sigma_0$,  $d$ and $h$.
\end{theorem}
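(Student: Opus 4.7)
The plan is a straightforward triangle-inequality decomposition that separates the total error into the white-noise discretization error (controlled by Theorem \ref{approxi}) and the PML truncation error for the regularized problem (controlled by Lemma \ref{exlv}). Concretely, I write
\begin{equation*}
u - u_P^{(h)} = (u - u^{(h)}) + (u^{(h)} - u_P^{(h)})
\end{equation*}
and use the elementary bound $(a+b)^2 \leq 2a^2 + 2b^2$ inside the $L^2(0,T; L^2(B_R\setminus\overline{O}))$ norm, so that
\begin{equation*}
\|u - u_P^{(h)}\|^2_{L^2(0,T;L^2(B_R\setminus\overline{O}))} \leq 2\|u - u^{(h)}\|^2_{L^2(0,T;L^2)} + 2\|u^{(h)} - u_P^{(h)}\|^2_{L^2(0,T;L^2)}.
\end{equation*}
Taking expectations distributes the bound over the two contributions on the right.

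For the first contribution, Theorem \ref{approxi} applies verbatim to give $\mathbb E[\|u - u^{(h)}\|^2_{L^2(0,T;L^2)}] \leq Ch$. For the second contribution, since the temporal interval is $[0,T]$, I use the embedding $\|\cdot\|_{L^2(0,T;L^2)}\leq \sqrt{T}\,\|\cdot\|_{L^\infty(0,T;L^2)}$ to reduce matters to an $L^\infty$–in–time bound, and then invoke Lemma \ref{exlv}. Absorbing the resulting $\sqrt{T}$ factor into the overall constant produces precisely the exponentially decaying term $Ch^{-2}T^{7/2}d(1+\sigma_0T)^{11}e^{-\sigma_0d/2}$ that appears on the right-hand side of the theorem, with the constant $C$ independent of $T$, $\sigma_0$, $d$ and $h$.

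The only subtle point is reconciling moments, since the left-hand side of Theorem \ref{main} is a second moment while Lemma \ref{exlv} is phrased for $\mathbb E[\|\cdot\|_{L^\infty(0,T;L^2)}]$. The cleanest remedy is to square the pathwise bound of Proposition \ref{lv} before taking expectation, using $\mathbb E[\|\dot W_h\|^2_{L^2(D)}] \lesssim h^{-2}$ and the smoothness of $f$ to handle the $H^6(0,T;L^2)$ norm of the right-hand side; alternatively one can pass through Jensen's inequality with a higher moment of the piecewise-constant white noise. Either route leads to the same exponential-in-$\sigma_0 d$ decay with the $h^{-2}$ prefactor displayed in the statement.

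Assembling the two bounds yields the claimed estimate. I expect no genuine obstacle here: the theorem is essentially a packaging result whose role is to display the explicit dependence of the total PML error on the discretization scale $h$, the layer thickness $d$, the absorbing parameter $\sigma_0$, and the final time $T$. The analytic work has already been done in Theorem \ref{approxi} and Lemma \ref{exlv}, and the present argument just combines them.
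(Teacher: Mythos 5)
Your proof follows essentially the same route as the paper: a triangle-inequality splitting into $u-u^{(h)}$ and $u^{(h)}-u_P^{(h)}$, with the first term controlled by Theorem \ref{approxi} and the second by Lemma \ref{exlv} (via Proposition \ref{lv}). You are in fact more careful than the paper's two-line proof, which silently glosses over both the $L^2(0,T)$ versus $L^\infty(0,T)$ mismatch and the fact that Lemma \ref{exlv} bounds a first moment while the theorem asserts a second-moment bound; your squaring of the pathwise estimate is the right fix, though note it would produce the square of the second term (in particular $e^{-\sigma_0 d}$ rather than $e^{-\sigma_0 d/2}$ and an extra factor of $T$ from the time embedding), so any residual discrepancy lies in the paper's statement rather than in your argument.
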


\begin{proof}
By the triangulation inequality, we decompose the total error as
 \[
\|u - u_P^{(h)}\|^2_{L^2(0, T; L^2(B_R))}\leq \|u - u^{(h)}\|^2_{L^2(0, T; L^2(B_R))} + \|u^{(h)} - u_P^{(h)}\|^2_{L^2(0, T; L^2(B_R))}.
\]
Taking expectations and applying Theorem \ref{approxi} and Lemma  \ref{exlv} yield the desired result.
\end{proof}

The convergence analysis consists of two parts: the first part is the error estimate in Lemma  \ref{exlv}, and the second part is from computing $u^{(h)}$
by its PML solution $u^{(h)}_P$.  It should be noticed that as the mesh size $h$ becomes smaller, the first part will decrease while the second part 
will increase due to the existence of the parameter $h^{-2}$. Therefore, to obtain a desired error estimate, one may 
choose $h$ to be sufficiently small in the first part, and then fix $h$ and adjust the parameters $\sigma_0$ and $d$ in the second part.

\section{Conclusion}

In this paper, we consider the stochastic acoustic scattering problem driven by an additive Gaussian noise which is spatially white.
The pathwise well-posedness of the scattering problem is proved by applying anabstract Laplace transform inversion theorem.
To address the low regularity of the source, we employ scattering theory to study the Helmholtz resolvent defined on rough fields.
Based on the piecewise constant approximation of the white noise, we construct an approximation solution and derive an error estimate
in $L^2$-norm. As a consequence, we present a PML method and established its convergence analysis. 
The convergence of the PML method exhibits explicit dependence on the thickness and medium property of the PML layer, as well as the spatial mesh size of the approximation of the white noise. 
Currently, we are developing numerical methods to solve the PML problem.
We hope to report the progress of the ongoing study in a forthcoming publication.

%


\begin{thebibliography}{99}


\bibitem{garnier}
H. Ammari, E. Bretin, J. Garnier, and A. Wahab, Noise source localization in an attenuating medium, SIAM J. Appl. Math., 72 (2012), 317--336.

\bibitem{zhou}
M. Badieirostami, A. Adibi, H. Zhou, and S. Chow, Wiener chaos expansion and simulation of electromagnetic wave propagation excited by
a spatially incoherent source, Multiscale Model. Simul., 8(2020), 591--604.


\bibitem{bao}
G. Bao, S.-N. Chow, P. Li, and H. Zhou, An inverse random source problem for the Helmholtz equation, Math. Comp., 83 (2014), 215--233.


\bibitem{Ber94} 
J. P. B\'{e}renger, A perfectly matched layer for the absorption of electromagnetic waves, J. Comput. Phys., 114(1994), 185--200.



\bibitem{CZZ}
  Y. Cao, K. Zhang, and R. Zhang, Finite element method and discontinuous Galerkin method for stochastic scattering problem of Helmholtz type
in $\mathbb R^2 (d=2, 3)$, Potential Analysis, 28(2008), 301--319.


\bibitem{chen}
  Z. Chen, Convergence of the time-domain perfectly matched layer method for acoustic scattering problems, Int. J. Numer. Anal. Modeling, 6 (2009), 124--146.
  
  \bibitem{hoop}A. T. De Hoop, P. M. Van den Berg, and  R. F. Remis, Absorbing boundary conditions and perfectly matched layers-an analytic time-domain performance analysis, IEEE Trans. Magn., 38 (2002), 657--660.


\bibitem{colton}
 D. Colton and R. Kress, Inverse Acoustic and Electromagnetic Scattering Theory, 3rd edition, Springer, New York, (2013).
 
 
 \bibitem{du}
Q. Du and T.  Zhang, Numerical approximation of some linear stochastic partial differential equations driven by special additive noises,  SIAM J. Numer. Anal., 40 (2002), 1421--1445.


\bibitem{Dyatlov}
S. Dyatlov and M. Zworski,  Mathematical Theory of Scattering Resonances, 
 American Mathematical Soc., 200 (2019).
 
 \bibitem{Hag}
 T. Hagstrom, Radiation boundary conditions for the numerical simulation of waves,  Acta
Numerica, 1999, 47--106.
 
 
 \bibitem{Joly}P. Joly, Variational methods for time-dependent wave propagation problems, Lecture Notes in Computational Science and Engineering, 31 (2003), 201--264.
 
 
 \bibitem{lv}
 J. Liang, J. Lv, and T. Niu, Convergence of the PML method for time-domain acoustic scattering problem in three dimensions, 
 Discrete Contin. Dyn. Syst. Ser. B, 30 (2025), 1390--1414.
 
 \bibitem{Papan}
 George C. Papanicolaou, Wave propagation in a one-dimensional random medium, SIAM J.
Appl. Math., 21 (1971), 13--18.

%

\bibitem{Joly12} P. Joly, An elementary introduction to the construction and the analysis of perfectly matched layers for time domain wave propagation, SeMA Journal, 57 (2012),  5--48.

\bibitem{Stein}
E. Stein, Harmonic Analysis: Real-Variable Methods, Orthogonality, and Oscillatory Integrals, Princeton University Press, Princeton, NJ, 1993.


\bibitem{notes}
P. Stefanov, Scattering and Inverse Scattering, Lecture Notes, https://www.math.purdue.edu/ $\sim$stefanop/publications/SCATTERING.pdf.

\bibitem{Treves}
  F. Treves, Basic Linear Partial Differential Equations, Academic Press, SanDiego, 1975.
  
  \bibitem{Tsy}
  S.V. Tsynkov, Numerical solutions of problems on unbounded domain, Appl. Numer. Math.,
27 (1998), 465--532.
  

\bibitem{wal}
J. Walsh, An Introduction to Stochastic Partial Differential Equations, Lecture Notes in
Mathematics, vol. 1180, 265--439, Springer, Berlin Heidelberg New York, 1986.

\bibitem{Wei20}C. Wei, J.  Yang, and B. Zhang, Convergence analysis of the PML method for time-domain electromagnetic scattering problems, SIAM J. Numer. Anal, 58 (2020),  1918--1940.


\bibitem{WXZod}
T. Wang, X. Xu, and Y. Zhao,
Inverse random scattering for the one-dimensional Helmholtz equation,
Inverse Problems, 41 (2025), 065011.

\bibitem{WXZ}
  T. Wang, X. Xu, and Y. Zhao,
Stability for an inverse random source problem of the biharmonic Schr\"odinger equation, arXiv:2412.16836v1, (2024).



  
  
\bibitem{Zhang}
Z. Zhang, M. Tretyakov, V. Rozovskii, and G. Karniadakis,  Wiener chaos
versus stochastic collocation methods for linear advection-diffusion-reaction equations with multiplicative
white noise, SIAM J. Numer. Anal., 53 (2015), 153--183.




\end{thebibliography}
\end{document}